\documentclass[12pt]{amsart}

\usepackage{amssymb,amsthm,amsmath,url,relsize,etoolbox,cite,nameref,enumerate,fancyhdr}
\usepackage{mathtools,xcolor,enumitem,bm,bbm}
\usepackage[utf8]{inputenc}
\usepackage[hidelinks]{hyperref}
\usepackage{mathrsfs}
\usepackage{mnsymbol}
\usepackage{tabularx}
\usepackage{nicematrix}



\addtolength{\oddsidemargin}{-.75in}
\addtolength{\evensidemargin}{-.75in}
\addtolength{\textwidth}{1.5in}
\addtolength{\topmargin}{-.875in}
\addtolength{\textheight}{1.75in}





\theoremstyle{plain}
\newtheorem*{theorem*}{Theorem}
\newtheorem{theorem}[subsubsection]{Theorem}

\newtheorem{lemma}[subsubsection]{Lemma}

\newtheorem{definition}[subsubsection]{Definition}

\DeclareMathOperator{\degree}{deg}
\DeclareMathOperator{\modulus}{mod}

\DeclareMathOperator{\rank}{rank}

\DeclareMathOperator{\reduced}{Red}
\DeclareMathOperator{\partition}{Part}

\DeclareMathOperator{\hank}{Hank}




\newcommand*\mathbfe[1]{\boldsymbol{#1}}
\newcommand{\rhopi}{$(\rho , \pi )$}
\newcommand*\mean[2]{\textrm{Mean}_{#1} ({#2})}
\newcommand*\variance[2]{\textrm{Var}_{#1} ({#2})}

\newcommand{\multisetmatrix}{\operatorname{MS}}


\setcounter{secnumdepth}{2}

\setcounter{tocdepth}{2}
\makeatletter
\def\l@subsection{\@tocline{2}{0pt}{2.5pc}{5pc}{}}
\makeatother

\setlength{\parindent}{0pt}

\makeatletter
\renewcommand*\env@matrix[1][*\c@MaxMatrixCols c]{%
  \hskip -\arraycolsep
  \let\@ifnextchar\new@ifnextchar
  \array{#1}}
\makeatother

\begin{document}

\title{The Variance of the Sum of Two Squares over Intervals in $\mathbb{F}_q [T]$: I}
\author{Michael Yiasemides}
\date{\today}
\address{Department of Mathematics, University of Exeter, Exeter, EX4 4QF, UK}
\email{my298@exeter.ac.uk}
\subjclass[2020]{Primary 11N64; Secondary 11T24, 11T55, 15B05, 15B33}
\keywords{sums of squares, variance, moments, mean values, Hankel matrices, rank, kernel, additive character, polynomial, finite fields, function fields}

\maketitle

\begin{abstract}
For $B \in \mathbb{F}_q [T]$ of degree $2n \geq 2$, consider the number of ways of writing\break $B=E^2 + \gamma F^2$, where $\gamma \in \mathbb{F}_q^*$ is fixed, and $E,F \in \mathbb{F}_q [T]$ with $\degree E = n$ and $\degree F = m < n$. We denote this by $S_{\gamma ; m} (B)$. We obtain an exact formula for the variance of $S_{\gamma ; m} (B)$ over intervals in $\mathbb{F}_q [T]$. We use the method of additive characters and Hankel matrices that the author previously used for the variance and correlations of the divisor function. In Section \ref{section, motivation}, we give a short overview of our approach; and we briefly discuss the possible extension of our result to the number of ways of writing $B=E^2 + T F^2$.
\end{abstract}


\allowdisplaybreaks


\section{Introduction} \label{section, introduction}

The representation of natural numbers as a sum of powers is a topic in number theory that has a history stretching back to ancient times. During the 3rd century AD, Diophantus of Alexandria theorised that every positive integer can be expressed as the sum of four squares. This was proved by Lagrange in 1770. Waring generalised this by conjecturing that for each positive integer $k$ there exists a positive integer $g(k)$ such that every natural number can be expressed as the sum of at most $g(k)$ $k$-th powers of natural numbers. This was proved by various authors. \\

Further extensions of this problem exist, and for a more detailed account we refer the reader to the survey by Vaughan and Wooley \cite{VaughanWooley2002_WaringsProblemSurvey}. A related problem is the natural numbers that can be expressed as a sum of two squares, and how they are distributed in intervals. Specifically, for integers $n \geq 0$ let
\begin{align*}
b(n) 
= \begin{cases}
1 &\text{ if $n=a^2 +b^2$ for some integers $a,b \geq 0$;}\\
0 &\text{ otherwise.}
\end{cases}
\end{align*}
We are interested in the mean and variance of this function in intervals. Landau \cite{Landau1908_UberEinteilungPositivenGanzenZahlenKlassen} proved
\begin{align*}
\frac{1}{x} \sum_{n \leq x} b(n)
\sim \bigg( \frac{1}{\sqrt{2}} \prod_{p \equiv 3 (\modulus 4)} (1-p^{-2} )^{-\frac{1}{2}} \bigg) \frac{1}{\sqrt{ \log x}}
\end{align*}
as $x \rightarrow \infty$. Regarding more general intervals, Hooley \cite{Hooley1994_IntervBetweenNumbSumTwoSquareIV} proved unconditionally for $\frac{7}{12} < \epsilon < 1$ that 
\begin{align*}
\frac{1}{2 x^{\epsilon} } \sum_{\lvert n-x \rvert \leq x^{\epsilon}} b(n)
\sim \bigg( \frac{1}{\sqrt{2}} \prod_{p \equiv 3 (\modulus 4)} (1-p^{-2} )^{-\frac{1}{2}} \bigg) \frac{1}{\sqrt{ \log x}}
\end{align*}
as $x \rightarrow \infty$. Conditional on the Riemann hypothesis for $\zeta (s)$ and for the Dirichlet $L$-function associated to the non-trivial character of modulus $4$, Hooley proved the above for $\frac{1}{2} < \epsilon < 1$. It is conjectured that the result actually holds for $0 < \epsilon < 1$. \\

We can study this problem in function fields, for which we need to establish some notation. Let $\mathcal{A} := \mathbb{F}_q [T]$, the polynomial ring over the finite field of order $q$ (a prime power). For $A \in \mathcal{A} \backslash \{ 0 \}$, we define $\lvert A \rvert := q^{\degree A}$, and $\lvert 0 \rvert := 0$. We denote the set of monics by $\mathcal{M}$. For $\mathcal{S} \subseteq \mathcal{A}$ and integers $n \geq 0$, we define $\mathcal{S}_n , \mathcal{S}_{\leq n} , \mathcal{S}_{<n}$ to be the set of elements in $\mathcal{S}$ with degree $n , \leq n , < n$, respectively (we take $\degree 0$ to be $-\infty$). For $A \in \mathcal{A}$ and $h \geq 0$, we define the interval of centre $A$ and radius $h$ by $I(A;h) := \{ B \in \mathcal{A} : \degree (B-A) < h \}$. Finally, for $B \in \mathcal{M}$, we define
\begin{align} \label{statement, b_q,T definition}
b_{q;T} (B)
:= \begin{cases}
1 &\text{ if $B = E^2 + T F^2$ for some $E,F \in \mathcal{M}$;} \\
0 &\text{ otherwise.}
\end{cases}
\end{align}
We attempt to remain as consistent as possible with the notation established in \cite{Bary-SorokerSmilanskyWolf2016_FunFieldAnalogueLandauTheoSumSquare}. \\

Bary-Soroker, Smilansky, and Wolf \cite{Bary-SorokerSmilanskyWolf2016_FunFieldAnalogueLandauTheoSumSquare} prove that
\begin{align*}
\frac{1}{q^n } \sum_{B \in \mathcal{M}_n } b_{q;T} (B)
= \frac{1}{4^n } \binom{2n}{n} + O_n (q^{-1}) 
\end{align*}
as $q^n \rightarrow \infty$. There are several ways that $q^n$ can tend to infinity. The above is valid when $q$ is much larger than $n$. For all ways that $q^n$ can tend to infinity, Gorodetsky \cite{Gorodetsky2017_PolyAnalogueLandauTheoRelateProb} proved that
\begin{align*}
\frac{1}{q^n } \sum_{B \in \mathcal{M}_n } b_{q;T} (B)
= \bigg( (1-q^{-1})^{-\frac{1}{2}} \prod_{\chi_2 (P) = -1} \big( 1 - q^{-2 \degree P})^{-\frac{1}{2}} \bigg)
	\binom{n-\frac{1}{2}}{n} \bigg( 1 + O_n \Big( \frac{1}{qn} \Big) \bigg) ,
\end{align*}
where $\chi_2$ is the non-trivial quadratic Dirichlet character of modulus $T$. \\

For more general intervals in function fields, Bank, Bary-Soroker, and Fehm \cite{BankBary-SorokerFehm2018_SumTwoSquareShortIntervPolyRingFinField} prove the following for the mean value. For odd $q$ odd, $n>2$, $3 \leq h \leq n$, and $A \in \mathcal{M}_n$, we have
\begin{align*}
\frac{1}{q^{h} } \sum_{B \in I(A;h)} b_{q;T} (B)
= \frac{1}{4^n } \binom{2n}{n} + O_n (q^{-\frac{1}{2}}) 
\end{align*}
as $q \rightarrow \infty$. Gorodetsky and Rodgers \cite{GorodetskyRodgers2021_VarNumbTwoSquareFqTShortInterv} obtain an asymptotic formula as $q \longrightarrow \infty$ for the variance. \\

In this paper, we are interested in functions that are slightly different to (\ref{statement, b_q,T definition}). First, for $B \in \mathcal{M}$ with $\degree B \geq 1$, we define
\begin{align*}
S_{T} (B)
:= \lvert \{ (E,F) \in \mathcal{M}^2 : B = E^2 + T F^2 \} \rvert .
\end{align*}
Note that if $\degree B = 2n$, then we must have $\degree E = n$ and $\degree F < n-1$; while if $\degree B = 2n+1$, then we must have $\degree E \leq n$ and $\degree F = n$. The function $S_{T} (B)$ differs from (\ref{statement, b_q,T definition}) only in that it counts the number of ways of expressing $B$ as a sum of squares, instead of simply being an indicator function. We briefly discuss the function $S_{T} (B)$ further in Section \ref{section, motivation}. \\

The second function we are interested in is $S_{\gamma} (B)$ which is similar to $S_{T} (B)$ but we have a fixed $\gamma \in \mathbb{F}_q^*$ in the place of $T$. That is, for $B \in \mathcal{M}$ of degree $2n \geq 2$, we define
\begin{align*}
S_{\gamma} (B)
:= \lvert \{ (E,F) \in \mathcal{M}_{n} \times \mathcal{M}_{<n} : B = E^2 + \gamma F^2 \} \rvert .
\end{align*}
Note that when $\gamma \neq 1$, we are forced to have $\degree E = n$ and $\degree F < n$, and so we could simply write $(E,F) \in \mathcal{M}^2$ in the definition, instead of $(E,F) \in \mathcal{M}_{n} \times \mathcal{M}_{<n}$. However, when $\gamma = 1$, this is not possible as the roles of $E$ and $F$ would be symmetric, and it would make $S_{1} (B)$ twice the value we would like it to be. Again, we briefly discuss this function further in Section \ref{section, motivation}, and it will be the focus of Part II of this paper. \\

The third function, and the one we established results for in this paper, is defined as follows. For $0 \leq m \leq n-1$,
\begin{align*}
S_{\gamma ; m} (B)
:= \lvert \{ (E,F) \in \mathcal{M}_{n} \times \mathcal{M}_{m} : B = E^2 + \gamma F^2 \} \rvert .
\end{align*}
We are interested in the mean and variance of $S_{\gamma ; m} (B)$ over intervals. Thus, for $0 \leq h \leq 2n$, we define
\begin{align*}
\mean{2n,h}{S_{\gamma ; m}}
:= \frac{1}{q^{2n}} \sum_{A \in \mathcal{M}_{2n}} \sum_{B \in I(A;h)} S_{\gamma ; m} (B) ,
\end{align*}
and we have that
\begin{align*}
\mean{2n,h}{S_{\gamma ; m}}
= \frac{q^h}{q^{2n}} \sum_{B \in \mathcal{M}_{2n}} S_{\gamma ; m} (B) 
= \frac{q^h}{q^{2n}} \sum_{\substack{ E \in \mathcal{M}_{n} \\ F \in \mathcal{M}_m}} 1 
=  q^{m+h-n} ;
\end{align*}
and we define the variance
\begin{align}
\begin{split} \label{statement, intro section, S_gamma,m variance}
\variance{2n,h}{S_{\gamma ; m}}
:= &\frac{1}{q^{2n}} \sum_{A \in \mathcal{M}_2n} 
	\Big( \sum_{B \in I(A;h)} S_{\gamma ; m} (B) - \mean{2n,h}{S_{\gamma ; m}} \Big)^2 \\
= &\frac{1}{q^{2n}} \sum_{A \in \mathcal{M}_{2n}} \Big( \sum_{B \in I(A;h)} S_{\gamma ; m} (B) \Big)^2 
	- q^{2(m+h-n)} . \\
\end{split}
\end{align}

We prove the following theorem.

\begin{theorem} \label{main theorem, variance sums of squares}
For $m \geq 1$ and $m+1 \leq n \leq 2m-1$, we have
\begin{align*}
\variance{2n,h}{S_{\gamma ; m}}
= \begin{cases}
0 &\text{ if $m \leq h \leq 2n$;} \\
\frac{q^h}{q^n} (q^m + q^h ) &\text{ if $2m-n \leq h \leq m-1$;} \\
\frac{q^{m+h}}{q^{2n}} \bigg[ q^n + q^m + (2m-n-h) (q-1) q^{m-1} \bigg] &\text{ if $0 \leq h \leq 2m-n-1$.}
\end{cases} \\
\end{align*}

For $m \geq 1$ and $n=2m$, we have
\begin{align*}
\variance{2n,h}{S_{\gamma ; m}}
= \begin{cases}
0 &\text{ if $m \leq h \leq 2n$;} \\
\frac{q^h}{q^n} (q^m + q^h ) &\text{ if $0 \leq h \leq m-1$.} 
\end{cases} \\
\end{align*}

For $m \geq 1$ and $n \geq 2m+1$, we have
\begin{align*}
\variance{2n,h}{S_{\gamma ; m}}
= \begin{cases}
0 &\text{ if $n \leq h \leq 2n$;} \\
\frac{q^{2m+h}}{q^{2n}} (q^n - q^h ) &\text{ if $2m \leq h \leq n-1$;} \\
\frac{q^{2h}}{q^{2n}} (q^n - q^{2m}) &\text{ if $m \leq h \leq 2m-1$;} \\
\frac{q^{m+h}}{q^{2n}} (q^n - q^{m+h}) &\text{ if $0 \leq h \leq m-1$.} 
\end{cases} \\
\end{align*}

For $m=0$ and $n \geq 1$, we have
\begin{align*}
\variance{2n,h}{S_{\gamma ; m}}
= \begin{cases}
0 &\text{ if $n \leq h \leq 2n$;} \\
\frac{q^h}{q^{2n}} (q^n - q^h ) &\text{ if $0 \leq h \leq n-1$.} 
\end{cases} 
\end{align*}
\end{theorem}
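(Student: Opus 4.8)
The plan is to carry out the additive-character/Hankel-matrix analysis in four steps.

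\emph{Step 1 (reduction to an autocorrelation).} Expanding the square in the definition of $\variance{2n,h}{S_{\gamma ; m}}$ and writing $\sum_{B \in I(A;h)} S_{\gamma ; m}(B) = \#\{(E,F) \in \mathcal{M}_{n} \times \mathcal{M}_{m} : \degree(E^2 + \gamma F^2 - A) < h\}$, I would carry out the sum over $A \in \mathcal{M}_{2n}$ first. For monic $C_1, C_2$ of degree $2n$ there are exactly $q^h$ polynomials $A \in \mathcal{M}_{2n}$ with $\degree(C_1 - A) < h$ and $\degree(C_2 - A) < h$ when $\degree(C_1 - C_2) < h$, and none otherwise, so
\begin{align*}
\variance{2n,h}{S_{\gamma ; m}} = \frac{q^h}{q^{2n}}\, N_h - q^{2(m+h-n)}, \qquad N_h := \#\big\{ (E_1,E_2,F_1,F_2) : \degree\big(E_1^2 - E_2^2 + \gamma(F_1^2 - F_2^2)\big) < h \big\},
\end{align*}
the $E_i$ ranging over $\mathcal{M}_n$ and the $F_i$ over $\mathcal{M}_m$.

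\emph{Step 2 (linearisation and the character sum).} Since $q$ is odd, substitute $U = E_1 + E_2$, $V = E_1 - E_2$, $W = F_1 + F_2$, $X = F_1 - F_2$: then $E_1^2 - E_2^2 = UV$, $F_1^2 - F_2^2 = WX$, and as $(E_1,E_2)$ ranges over $\mathcal{M}_n^2$ the polynomial $U$ ranges over all $2T^n + R$ with $\degree R < n$ and $V$ over all polynomials of degree $< n$, independently (similarly for $W,X$ with $m$ in place of $n$). Detecting $\degree(UV + \gamma WX) < h$ by orthogonality of a fixed nontrivial additive character $\psi$ of $\mathbb{F}_q$ introduces a sum over $\alpha = (\alpha_h, \dotsc, \alpha_{2n-1}) \in \mathbb{F}_q^{2n-h}$ of $\psi\big(\sum_{j=h}^{2n-1} \alpha_j\, [T^j](UV + \gamma WX)\big)$, where $[T^j]P$ is the coefficient of $T^j$ in $P$. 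The exponent is bilinear: $\sum_j \alpha_j\, [T^j](UV)$ is the pairing of the coefficient vectors of $U$ and $V$ through the $(n+1)\times n$ Hankel matrix $M(\alpha) = (\alpha_{a+b})$ (with $\alpha_j := 0$ for $j<h$), and $\sum_j \alpha_j\, [T^j](\gamma WX)$ pairs $W$ and $X$ through $\gamma M'(\alpha)$, where $M'(\alpha)$ is the top-left $(m+1)\times m$ block of $M(\alpha)$ (the nonzero scalar $\gamma$ plays no role below). Summing over the unconstrained variables $V$ and $X$ forces the coefficient vector of $U$, resp.\ $W$, into the left kernel of $M(\alpha)$, resp.\ $M'(\alpha)$; summing then over $U$ and $W$, whose leading coefficients are pinned to $2$, each $\alpha$ contributes $q^{2n - \rank M(\alpha)}\, q^{2m - \rank M'(\alpha)}$ when the last row of $M(\alpha)$ lies in the $\mathbb{F}_q$-span of its remaining rows and the last row of $M'(\alpha)$ lies in the span of its remaining rows, and contributes $0$ otherwise. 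The term $\alpha = 0$ contributes exactly $q^{2(m+h-n)}$, cancelling the subtracted mean square, so that
\begin{align*}
\variance{2n,h}{S_{\gamma ; m}} = q^{2(m+h-n)} \sum_{\alpha} q^{-\rank M(\alpha) - \rank M'(\alpha)},
\end{align*}
the sum being over nonzero $\alpha \in \mathbb{F}_q^{2n-h}$ for which both row-span conditions hold.

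\emph{Step 3 (Hankel-rank combinatorics).} What is left is to evaluate this last sum, which is a purely linear-algebraic question about the nested family of Hankel matrices built from the truncated data $(0, \dotsc, 0, \alpha_h, \dotsc, \alpha_{2n-1})$: one must control, jointly, the ranks of $M(\alpha)$ and of its leading block $M'(\alpha)$, and for which $\alpha$ each of the two row-span conditions (equivalently, ``the rank does not increase when the last row is appended'') holds. This is supplied by the rank and kernel theory of Hankel matrices over $\mathbb{F}_q$ — the same machinery the author used for the divisor function — through the rank ``staircase'' of a Hankel matrix and the explicit enumeration of Hankel matrices of each rank, refined by the rank of the leading block. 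The various ranges of $h$ in the statement record how far the forced zeros $\alpha_0 = \dotsb = \alpha_{h-1} = 0$ eat into $M(\alpha)$ and into $M'(\alpha)$, i.e.\ the position of $h$ relative to the thresholds $m$, $2m-n$, $n$, $2m$; the three-way split according to $n \le 2m-1$, $n=2m$, $n \ge 2m+1$ merely reflects the ordering of these thresholds; and once $h$ is large enough the row-span conditions force the remaining data to vanish, leaving only $\alpha = 0$ and hence variance $0$. The case $m = 0$ is degenerate: there $F \equiv 1$, the matrix $M'(\alpha)$ is absent, and only the statistics of $M(\alpha)$ survive. Assembling the counts case by case produces the stated formulas.

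The step I expect to be the main obstacle is Step 3: pinning down the \emph{joint} distribution of $\rank M(\alpha)$ and $\rank M'(\alpha)$, together with the two row-span conditions, over the truncated Hankel data, and then carrying the resulting case analysis through cleanly. Steps 1 and 2 are essentially formal manipulations.
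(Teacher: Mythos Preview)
Your approach is correct and takes a genuinely different route from the paper. The decisive difference is your Step~2 linearisation $E_1^2-E_2^2=(E_1+E_2)(E_1-E_2)$, which converts the problem into a \emph{bilinear} character sum. The paper never does this: it keeps the quadratic forms $\mathbf e^{T}H_{n+1,n+1}(\boldsymbol\alpha)\mathbf e$ and $\mathbf f^{T}\gamma H_{m+1,m+1}(\boldsymbol\alpha)\mathbf f$ intact and, to evaluate $\bigl(\sum_{\mathbf e}\psi(\mathbf e^T H\mathbf e)\bigr)\bigl(\sum_{\mathbf e}\psi(-\mathbf e^T H\mathbf e)\bigr)$, develops an entire block-diagonalisation theory for square Hankel matrices (the ``reduced $(\rho,\pi)$-form'') so that the resulting Gauss-type sums can be computed block by block. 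Your polarisation trick bypasses all of that machinery: after summing out $V$ and $X$ you are back in the purely linear situation of the divisor-function paper, where only ranks and kernel constraints matter. What the paper's approach buys is that it sets up a framework that generalises to the harder problems discussed in its Section~2 (e.g.\ $S_T(B)$, or higher moments), where no analogue of your difference-of-squares factorisation is available; what your approach buys is a much shorter and more elementary proof of this particular theorem.

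Two remarks. First, your Step~3 is not actually carried out; you still need a lemma enumerating, for each pair of ranks, the $\boldsymbol\alpha$ with $\alpha_0=\cdots=\alpha_{h-1}=0$ satisfying both ``last-row-in-span'' conditions on the $(n{+}1)\times n$ and $(m{+}1)\times m$ Hankel blocks. This is of the same flavour as the paper's counting lemma (its Lemma~4.0.2) and splits into the same case analysis according to the position of $h$ relative to $m$, $2m$, $2m-n$, $n$ --- so it is certainly doable, but it is the bulk of the work and should be written out. Second, note that your $\boldsymbol\alpha$ lives in $\mathbb F_q^{2n-h}$ (entries $\alpha_h,\dots,\alpha_{2n-1}$), one coordinate shorter than the paper's $\mathscr L_{2n}^{h}\subset\mathbb F_q^{2n+1}$; this is consistent, since $UV+\gamma WX$ has degree at most $2n-1$, and explains why your rank conditions on rectangular Hankel matrices replace the paper's strict $(\rho,\pi)$ conditions on square ones.
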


In this theorem, we have several cases and subcases because we allow $n$, $m$, and $h$ to be fixed. In Part II of this paper, where we consider the variance of $S_{\gamma} (B)$, the result is more succinct because $m$ can take any permissible value and is not fixed. For the same reason though, it does require us to alter our approach to an extent. We discuss the extension of our results to $S_{\gamma} (B)$ and $S_{T} (B)$ at the end of Section \ref{section, motivation}. At the beginning of that section we discuss how we prove Theorem \ref{main theorem, variance sums of squares} using the method of additive characters and Hankel matrices that the author used in \cite{Yiasemides2021_VariCorrDivFuncFpTHankelMatr_ArXiv_v2} for the variance and correlations of the divisor function. In Section \ref{section, Hankel matrices} we develop some theory on Hankel matrices, and in Section \ref{section, main theorems proofs} we prove Theorem \ref{main theorem, variance sums of squares}.

\section{Motivation} \label{section, motivation}

Suppose we have two polynomials $A,B \in \mathcal{A}$ such that
\begin{align*}
A = &a_0 + a_1 T + \ldots + a_m T^m , \\
B = &b_0 + b_1 T + \ldots + b_n T^n .
\end{align*}
We note that the product $AB$ is equal to
\begin{align} \label{statement, motivation section, AB as matrix mult}
\begin{matrix} 
\begin{pNiceMatrix}
a_0 & a_1 & \Cdots &  &  &  & a_m
\end{pNiceMatrix}
\\  \\  \\  \\  \\  \\
\end{matrix}
\begin{pNiceMatrix}
1  & T & T^2 & \Cdots & \Cdots             & \Cdots              & T^n \\
T  & T^2 &              &           &                        &                        & \Vdots \\
T^2  &              &              &           &                        &                        & \Vdots \\
\Vdots     &              &              &           &                        &                        & \Vdots \\
\Vdots     &              &              &           &                        &                        & T^{m+n-2} \\
\Vdots     &              &              &           &                        & T^{m+n-2} & T^{m+n-1} \\
T^m & \Cdots    & \Cdots    & \Cdots & T^{m+n-2} & T^{m+n-1} & T^{m+n} 
\end{pNiceMatrix}
\begin{pNiceMatrix} b_0 \\ b_1 \\ \Vdots \\  \\  \\  \\ b_n \end{pNiceMatrix} .
\end{align}
Indeed, the $i$-th coefficient of $AB$ is 
\begin{align*}
\sum_{\substack{0 \leq m_1 \leq m \\ 0 \leq n_1 \leq n \\ m_1 + n_1=i}} a_{m_1} b_{n_1} ,
\end{align*}
which is the same as the $i$-th coefficient of (\ref{statement, motivation section, AB as matrix mult}) above. Now, the matrix in (\ref{statement, motivation section, AB as matrix mult}) is such that all entries on a given skew-diagonal are identical, and thus it is a Hankel matrix. This is similar to a Toeplitz matrix, where the entries on a given diagonal are identical. From (\ref{statement, motivation section, AB as matrix mult}), we can immediately see how Hankel matrices are related to polynomial multiplication. \\

In the remainder of this paper, we will only work with Hankel matrices over $\mathbb{F}_q$. Formally, an $l \times m$ Hankel matrix over $\mathbb{F}_q$ is a matrix of the form
\begin{align*}
(\beta_{i+j-2} )_{\substack{1 \leq i \leq l \\ 1 \leq j \leq m }}
= \begin{pNiceMatrix}
\beta_0  & \beta_1 & \beta_2 & \Cdots & \Cdots             & \Cdots              & \beta_{l-1} \\
\beta_1  & \beta_2 &              &           &                        &                        & \Vdots \\
\beta_2  &              &              &           &                        &                        & \Vdots \\
\Vdots     &              &              &           &                        &                        & \Vdots \\
\Vdots     &              &              &           &                        &                        & \beta_{l+m-4} \\
\Vdots     &              &              &           &                        & \beta_{l+m-4} & \beta_{l+m-3} \\
\beta_m & \Cdots    & \Cdots    & \Cdots & \beta_{l+m-4} & \beta_{l+m-3} & \beta_{l+m-2} \\
\end{pNiceMatrix} ,
\end{align*}
where $\beta_0 , \ldots , \beta_{l+m-2} \in \mathbb{F}_q$. We denote the set of all $l \times m$ Hankel matrices over $\mathbb{F}_q$ by $\mathscr{H}_{l,m}$. It will be required later to define $\mathscr{H}_{l,m}^h$ to be the set of $l \times m$ Hankel matrices over $\mathbb{F}_q$ with first $h$ skew-diagonals equal to zero (that is, in terms of the matrix above, we have $\beta_0 , \beta_1 , \ldots , \beta_{h-1} = 0$). For square matrices, let us also define $\mathscr{H}_{n,n}^h (r)$ to be the set of $n \times n$ Hankel matrices over $\mathbb{F}_q$ with first $h$ skew-diagonals equal to zero and with rank equal to $r$. \\

We can see that for a sequence $\mathbfe{\alpha} = (\alpha_0 , \alpha_1 , \ldots , \alpha_n ) \in \mathbb{F}_q^{n+1}$, there are $n+1$ Hankel matrices associated to $\mathbfe{\alpha}$, which depend on the number of rows and columns. These are the matrices $(\alpha_{i+j-2} )_{\substack{1 \leq i \leq l+1 \\ 1 \leq j \leq m+1 }}$ where $l,m \geq 0$ and $l+m=n$, which we denote by $H_{l+1 , m+1} (\mathbfe{\alpha})$. \\

Let us make a brief notational remark. Let $M$ be an $l \times m$ matrix and let $l_1 , l_2 , m_1 , m_2$ satisfy $l_1 + l_2 \leq l$ and $m_1 + m_2 \leq m$. Then, we define $M[l_1 , -l_2 ; m_1 , -m_2]$ to be the submatrix of $M$ consisting of the first $l_1$ and last $l_2$ rows, and the first $m_1$ and last $m_2$ columns. In the special cases when one or more of $l_1 , l_2 , m_1 , m_2$ are zero, we may not include them. For example, $M[l_1 ; -m_2]$ should be taken to be $M[l_1 , 0 ; 0 , -m_2 ]$. For a Hankel matrix $H_{l+1 , m+1} (\mathbfe{\alpha})$ as above, and $l_1 \leq l$ and $m_1 \leq m$, we may simply write $H_{l_1 , m_1} (\mathbfe{\alpha})$ in the place of $H_{l+1 , m+1} (\mathbfe{\alpha}) [l_1 ; m_1]$. \\

Regarding sums of squares, it is obvious that squaring a polynomial is a kind of multiplication, and so it is not surprising that Hankel matrices will appear. Specifically, we work with the equation $B = E^2 + \gamma F^2$, and Hankel matrices appear when we compare the coefficients on both sides, similar to what we did in (\ref{statement, motivation section, AB as matrix mult}). \\

To compare coefficients, we will use a non-trivial additive character and its orthogonality relation. An additive character $\psi$ on $\mathbb{F}_q$ is a function from $\mathbb{F}_q$ to $\mathbb{C}^*$ satisfying $\psi (a+b) = \psi (a) \psi (b)$ (note this implies $\psi (0) =1$ and $\psi (-a) = \psi (a)^{-1}$ for all $a \in \mathbb{F}_q$). We say $\psi$ is non-trivial if there exists $a \in \mathbb{F}_q^*$ such that $\psi (a) \neq 1$, and in this case we have the orthogonality relation
\begin{align} \label{statement, intro, exp identity sum}
\frac{1}{q} \sum_{\alpha \in \mathbb{F}_q } \psi (\alpha b)
= \begin{cases}
1 &\text{if $b=0$,} \\
0 &\text{if $b \in \mathbb{F}_q^*$.}
\end{cases}
\end{align}
The first case follows from the fact that if $b=0$, then $\psi (\alpha b) = 1$ for all $\alpha \in \mathbb{F}_q$. The second case follows from the fact that if $b \in \mathbb{F}_q^*$, then $\alpha b$ and $\alpha b + a$ both vary over $\mathbb{F}_q$ as $\alpha$ varies over $\mathbb{F}_q$, and so
\begin{align*}
\sum_{\alpha \in \mathbb{F}_q } \psi (\alpha b)
= \sum_{\alpha \in \mathbb{F}_q } \psi (\alpha b + a)
= \psi (a) \sum_{\alpha \in \mathbb{F}_q } \psi (\alpha b).
\end{align*}
Since $\psi (a) \neq 1$, we deduce that $\sum_{\alpha \in \mathbb{F}_q } \psi (\alpha b) =0$. In the remainder of this article, $\psi$ is a non-trivial character on $\mathbb{F}_q$. \\

We are now in a position to consider the variance of $S_{\gamma ; m} (B)$. From (\ref{statement, intro section, S_gamma,m variance}), it suffices to consider
\begin{align} \label{statement, motivation section, variance S_gamma,m minus the mean squared}
\sum_{A \in \mathcal{M}_{2n}} \Big( \sum_{B \in I(A;h)} S_{\gamma ; m} (B) \Big)^2 .
\end{align}
For $B \in \mathcal{M}_{2n}$, we have
\begin{align*}
S_{\gamma ; m} (B) 
= \sum_{\substack{E \in \mathcal{M}_n \\ F \in \mathcal{M}_m \\ E^2 + \gamma F^2 = B }} 1 
= \sum_{\substack{E \in \mathcal{M}_n \\ F \in \mathcal{M}_m }} \mathbbm{1} (E^2 + \gamma F^2 = B) 
= \sum_{\substack{E \in \mathcal{M}_n \\ F \in \mathcal{M}_m }} 
	\prod_{i=0}^{2n} \mathbbm{1} (\{ E^2 \}_i + \gamma \{ F^2 \}_i = \{ B \}_i ) ,
\end{align*}
where for a proposition $\mathrm{P}$, we define $\mathbbm{1} (\mathrm{P})$ to be $1$ if $\mathrm{P}$ is true and $0$ if $\mathrm{P}$ is false, and for $C \in \mathcal{A}$ we define $\{ C \}_i$ to be its $i$-th coefficient. Now, by (\ref{statement, intro, exp identity sum}), we have
\begin{align*}
\mathbbm{1} (\{ E^2 \}_i + \gamma \{ F^2 \}_i = \{ B \}_i )
= \sum_{\alpha_i \in \mathbb{F}_q } \psi \Big( \alpha_i (\{ E^2 \}_i + \gamma \{ F^2 \}_i - \{ B \}_i ) \Big) .
\end{align*}
Hence,
\begin{align*}
S_{\gamma ; m} (B) 
= &\sum_{\substack{E \in \mathcal{M}_n \\ F \in \mathcal{M}_m }} 
	\prod_{i=0}^{2n} \sum_{\alpha_i \in \mathbb{F}_q } 
		\psi \Big( \alpha_i (\{ E^2 \}_i + \gamma \{ F^2 \}_i - \{ B \}_i ) \Big) \\
= &\sum_{\substack{E \in \mathcal{M}_n \\ F \in \mathcal{M}_m }} 
	\sum_{\mathbfe{\alpha} \in \mathbb{F}_q^{2n+1} }
	\prod_{i=0}^{2n} \psi \Big( \alpha_i (\{ E^2 \}_i + \gamma \{ F^2 \}_i - \{ B \}_i ) \Big) \\
= &\sum_{\mathbfe{\alpha} \in \mathbb{F}_q^{2n+1} }
	\sum_{\substack{\mathbf{e} \in \mathbb{F}_q^{n} \times \{ 1 \} \\ \mathbf{f} \in \mathbb{F}_q^{m} \times \{ 1 \}  }} 	
	\psi \Big( \mathbfe{e}^T H_{n+1 , n+1} (\mathbfe{\alpha}) \mathbfe{e} 
		+ \mathbfe{f}^T \gamma H_{m+1 , m+1} (\mathbfe{\alpha}) \mathbfe{f}) \Big)
	\psi (\mathbfe{\alpha} \cdot \mathbf{b}) ,
\end{align*}
where $\mathbfe{\alpha} = (\alpha_0 , \ldots , \alpha_{2n})$, and $\mathbf{b}, \mathbf{e} , \mathbf{f}$ are the vectors whose $i$-th entry is equal to the\break  $i$-th coefficient of $B,E,F$, respectively. To see how the Hankel matrices appear, one must perform a few simple rearrangements, but it is not difficult and it is similar to (\ref{statement, motivation section, AB as matrix mult}). Now, we substitute the above into (\ref{statement, motivation section, variance S_gamma,m minus the mean squared}). We do not get into the details of the calculations in this section, but in short we can remove the sums over $A,B$ and this simply creates a restriction on the Hankel matrices $H_{n+1 , n+1} (\mathbfe{\alpha})$ and $H_{m+1 , m+1} (\mathbfe{\alpha})$ that we work with, or, more specifically, the underlying sequence $\mathbfe{\alpha}$. We end up with 
\begin{align*}
&\sum_{A \in \mathcal{M}_{2n}} \Big( \sum_{B \in I(A;h)} S_{n,m; \gamma} (B) \Big)^2 \\
= &\frac{1}{q^{2n-2h+1}} \sum_{ \mathbfe{\alpha} }
	\Bigg( \sum_{\substack{ \mathbf{e} \in \mathbb{F}_q^{n} \times \{ 1 \} \\ \mathbf{f} \in \mathbb{F}_q^{m} \times \{ 1 \} }}
		\psi \bigg( \mathbf{e}^T H_{n+1 , n+1} (\mathbfe{\alpha}) \mathbf{e}
			+ \mathbf{f}^T \gamma H_{m+1 , m+1} (\mathbfe{\alpha}) \mathbf{f}
		\bigg) \Bigg) \\
	& \hspace{6em} \times \Bigg( \sum_{\substack{ \mathbf{e} \in \mathbb{F}_q^{n} \times \{ 1 \} \\ \mathbf{f} \in \mathbb{F}_q^{m} \times \{ 1 \} }}
		\psi \bigg( \mathbf{e}^T H_{n+1 , n+1} (- \mathbfe{\alpha}) \mathbf{e}
			+ \mathbf{f}^T \gamma H_{m+1 , m+1} (- \mathbfe{\alpha}) \mathbf{f}
		\bigg) \Bigg) ,
\end{align*}
where the first sum on the right side is over the restricted values of $\mathbfe{\alpha}$. \\

In order to evaluate the above, it suffices to understand what values $\mathbf{e}^T H_{n+1 , n+1} (\mathbfe{\alpha}) \mathbf{e}$ and $\mathbf{f}^T H_{m+1 , m+1} (\mathbfe{\alpha}) \mathbf{f}$ take in $\mathbb{F}_q$ as $\mathbf{e} , \mathbf{f}$ vary in their respective ranges, and how often these values are attained. \\

We should compare this to the variance of the divisor function, where in \cite{Yiasemides2021_VariCorrDivFuncFpTHankelMatr_ArXiv_v2} the author used the same approach of additive characters and Hankel matrices. There, instead of working with $\mathbf{e}^T H_{n+1 , n+1} (\mathbfe{\alpha}) \mathbf{e}$ and $\mathbf{f}^T H_{m+1 , m+1} (\mathbfe{\alpha}) \mathbf{f}$, we worked with $\mathbf{e}^T H_{m+1 , n+1} (\mathbfe{\alpha}) \mathbf{f}$. Generally, the $i$-th coefficient of $\mathbf{e}$ can take values in $\mathbb{F}_q$, and so considering the sum over $e_i$ only, we have
\begin{align*}
\frac{1}{q} \sum_{e_i \in \mathbb{F}_q} \psi \Big( e_i (R_i \mathbf{f}) \Big)
= \begin{cases}
1 &\text{ if $R_i \mathbf{f} = 0$,} \\
0 &\text{ otherwise;} 
\end{cases}
\end{align*}
where $R_i$ is the $i$-th row of $H_{m+1 , n+1} (\mathbfe{\alpha})$, and the equality follows from (\ref{statement, intro, exp identity sum}). Thus, applying this to all $i$, we see that a non-zero contribution occurs if and only if $\mathbf{f}$ is in the kernel of $H_{m+1 , n+1} (\mathbfe{\alpha})$. Thus, it sufficed to understand the rank of our Hankel matrices. \\

Unfortunately, for our sums-of-squares problem we cannot continue in this manner because in the matrix multiplications $\mathbf{e}^T H_{n+1 , n+1} (\mathbfe{\alpha}) \mathbf{e}$ and $\mathbf{f}^T H_{m+1 , m+1} (\mathbfe{\alpha}) \mathbf{f}$, the vectors on the left and right of the matrix are identical (transposes of each other, technically). That is, we are more-or-less working with quadratic forms. Now, as our Hankel matrices are symmetric square matrices, one may consider diagonalising them. Indeed, the change of basis may preserve the ranges of $\mathbf{e}$ and $\mathbf{f}$, and it is much easier to work with diagonal matrices. However, it is not immediately obvious what eigenvalues (diagonal entries) the Hankel matrices can take, and how many Hankel matrices can be diagonalised to a given diagonal matrix, and so this approach also does not quite work. Instead, we will ``diagonalise'' our Hankel matrices into block-diagonal matrices, where each block is a lower skew-triangular Hankel matrix. This diagonalisation is natural and much easier to understand; the change of variable preserves the ranges of $\mathbf{e}$ and $\mathbf{f}$; and we can consider each block separately, and their lower skew-triangular form makes them relatively easy to work with. \\

However, a significant amount of work is still required, especially when one considers that we must simultaneously work with $H_{n+1 , n+1} (\mathbfe{\alpha})$ and its submatrix $H_{m+1 , m+1} (\mathbfe{\alpha})$, as well as the restrictions imposed on $\mathbfe{\alpha}$. Indeed, Section \ref{section, Hankel matrices} is dedicated to developing further the theory on Hankel matrices to achieve this. \\

Let us now consider how one would extend the approach above to $S_{\gamma} (B)$. Note that $S_{\gamma} (B) = \sum_m S_{\gamma ; m} (B)$. The sum over $m$ can easily be dealt with for the mean, but not for the variance. Indeed, we have $S_{\gamma} (B)^2 = \big( \sum_m S_{\gamma ; m} (B) \big)^2$. We have the ``diagonal'' terms $S_{\gamma ; m_1 } (B) S_{\gamma ; m_2 } (B)$ for $m_1 = m_2$, which we can deal with in the same way as above; but we also have the ``off-diagonal'' terms $S_{\gamma ; m_1 } (B) S_{\gamma ; m_2 } (B)$ for $m_1 \neq m_2$, which are significantly more difficult to work with. It may be possible to calculate these by brute force, but it would be incredibly tedious. Instead, it is more logical to work with all possible values of $m_1 , m_2$ simultaneously, so that averaging and cancellations can take place. This, however, requires us to develop further results on sums of additive characters involving Hankel matrices, and that is why it forms Part II of this paper. \\

Let us now consider the obstacles we encounter if we apply the approach above to the variance of $S_{T} (B)$. Unlike $S_{\gamma ; m} (B)$ and $S_{\gamma} (B)$, which largely follow the same approach, the function $S_{T} (B)$ has some fundamental differences. Instead of working with
\begin{align*}
H_{n+1 , n+1} (\mathbfe{\alpha})
\hspace{3em} \text{ and } \hspace{3em}
H_{m+1 , m+1} (\mathbfe{\alpha}) ,
\end{align*}
we end up working with
\begin{align*}
H_{n+1 , n+1} (\mathbfe{\alpha})
\hspace{3em} \text{ and } \hspace{3em}
H_{m+1 , m+1} (\mathbfe{\alpha}') ,
\end{align*}
where $\mathbfe{\alpha}' := (\alpha_1 , \alpha_2 , \ldots , \alpha_{2m+1})$. For intuition, note that $H_{m+1 , m+1} (\mathbfe{\alpha})$ is the top-left\break $(m+1) \times (m+1)$ submatrix of $H_{n+1 , n+1} (\mathbfe{\alpha})$, whereas $H_{m+1 , m+1} (\mathbfe{\alpha}')$ is the top-left\break $(m+1) \times (m+1)$ submatrix of the matrix obtained by removing the first column of $H_{n+1 , n+1} (\mathbfe{\alpha})$. That is, $H_{m+1 , m+1} (\mathbfe{\alpha}')$ is ``shifted'' one column to the right, which ultimately stems from the fact that multiplying $F^2$ by $T$ ``shifts'' the coefficients of $F^2$ one space to the right. \\

Now, the entries in $H_{m+1 , m+1} (\mathbfe{\alpha})$ form the sequence $\mathbfe{\alpha}'' := (\alpha_0 , \alpha_1 , \ldots , \alpha_{2m})$, which is a subsequence of $\mathbfe{\alpha}$. In particular, one can obtain $\mathbfe{\alpha}$ by extending $\mathbfe{\alpha}''$ to the right. Whereas, the sequence $\mathbfe{\alpha}'$ is a subsequence of $\mathbfe{\alpha}$, but we must extend $\mathbfe{\alpha}'$ to the left and right in order to obtain $\mathbfe{\alpha}$. It is much easier to understand the relationship between Hankel matrices associated to sequences where one sequence is an extension of the other in one direction only. Whereas, if the extension is in two directions it is considerably more difficult. In fact, it requires a deeper understanding of Hankel matrices and their kernel structure. This, as well as the application to the variance of $S_{T} (B)$, is an interesting problem that warrants further investigation.

\section{Hankel Matrices} \label{section, Hankel matrices}

In this section we prove some results on Hankel matrices over $\mathbb{F}_q$ that we will require. Section 2 of \cite{Yiasemides2021_VariCorrDivFuncFpTHankelMatr_ArXiv_v2} develops much of the theory of these matrices, and we will use some results from there. We will be concerned with the square matrix case. So, suppose $H$ is an $n \times n$ Hankel matrix. \\

In Subsection 2.1 of \cite{Yiasemides2021_VariCorrDivFuncFpTHankelMatr_ArXiv_v2}, based on the analogous definition in \cite{HeinigRost1984_AlgMethToeplitzMatrOperat} for real Hankel matrices, they define the \rhopi-characteristic of $H$ to be the pair $(\rho_1 , \pi_1 )$, where $\rho_1$ is the largest integer in $\{ 1 , \ldots , n \}$ such that $H [ \rho_1 , \rho_1 ]$ is invertible, or if no such integer exists then $\rho_1 := 0$; and $\pi_1 := \rank (H) - \rho_1$. They write $\rho (H) = \rho_1$ and $\pi (H) = \pi_1$. \\

Here, we define the strict \rhopi-characteristic in the same way, except $\rho_1$ is the largest integer in $\{ 1 , \ldots , n-1 \}$ such that $H [ \rho_1 , \rho_1 ]$ is invertible, or if no such integer exists then $\rho_1 := 0$ (the difference is that $\rho_1$ cannot take the value $n$). We still take $\pi_1 := \rank (H) - \rho_1$. We write $\rho_s (H) = \rho_1$ and $\pi_s (H) = \pi_1$. \\

In Subsection 2.2 of \cite{Yiasemides2021_VariCorrDivFuncFpTHankelMatr_ArXiv_v2}, they define the \rhopi-form of Hankel matrices. In this paper we will define the reduced \rhopi-form of a square Hankel matrix which is, as the name suggests, related to the \rhopi-form. It should be noted, that in \cite{Yiasemides2021_VariCorrDivFuncFpTHankelMatr_ArXiv_v2} they work with the \rhopi-characteristic, while in this paper we work with the strict \rhopi-characteristic. However, this will not cause any abuse of definitions. 

\subsection{The Reduced $(\rho , \pi )$-form} \label{subsection, reduced rho pi form}

We proceed in a similar manner as in Section 2.2 of \cite{Yiasemides2021_VariCorrDivFuncFpTHankelMatr_ArXiv_v2}. If the strict \rhopi-characteristic of $H$ is $(0,0)$, then $H$ is simply the zero matrix; while if the strict \rhopi-characteristic of $H$ is $(0 , \pi_1 )$ for some $1 \leq \pi_1 \leq n$, then $H$ is of the form
\begin{align*}
\begin{pNiceMatrix}
0 & \Cdots & \Cdots & \Cdots & \Cdots & \Cdots & 0 \\
\Vdots &  &  &  &  &  & \Vdots \\
0 & \Cdots & \Cdots & \Cdots & \Cdots & \Cdots & 0 \\
0 & \Cdots & \Cdots & \Cdots & \Cdots & 0 & \lambda \\
0 & \Cdots & \Cdots & \Cdots & 0 & \lambda & \beta_{2} \\
\Vdots &  &  & \Iddots & \Iddots & \Iddots & \Vdots \\
0 & \Cdots & 0 & \lambda & \beta_{2} & \Cdots & \beta_{\pi_1 } 
\end{pNiceMatrix} ,
\end{align*}
where $\lambda \in \mathbb{F}_q^*$ and $\beta_{2} , \ldots , \beta_{\pi_1 }  \in \mathbb{F}_q$. (If $\pi_1 = n$, then the above should be interpreted as not having any rows/columns of zero at the top/left). A very similar result was proven in Subsection 2.2 of \cite{Yiasemides2021_VariCorrDivFuncFpTHankelMatr_ArXiv_v2}. Now, in these cases we simply define the reduced \rhopi-form of $H$ to be $H$. \\

Now suppose that the strict \rhopi-characteristic of $H = (\alpha_{i+j-2} )_{1 \leq i,j \leq n}$ is $(\rho_1 , \pi_1 )$ where $1 \leq \rho_1 \leq n-1$. Consider the submatrix $H [\rho_1 , \rho_1 ]$ and note that, by definition of $\rho_1$, it is invertible. In particular, there exists a unique solution $\mathbf{x} = (x_0 , \ldots , x_{\rho_1 -1})^T$ to the following equation:
\begin{align*}
H [\rho_1 , \rho_1 ] \mathbf{x}
= \begin{pmatrix} \alpha_{\rho_1 } \\ \vdots \\ \alpha_{2 \rho_1 -1} \end{pmatrix} .
\end{align*}
The vector on the right side is simply the column that appears to the right of $H [\rho_1 , \rho_1 ]$ in $H$ (equivalently, its transpose is the row that appears below $H [\rho_1 , \rho_1 ]$). It should be noted that $\mathbf{x}$ is uniquely determined by $H [\rho_1 , \rho_1 ]$ and the vector $(\alpha_{\rho_1 } , \ldots , \alpha_{2 \rho_1 -1})^T$. Since $\alpha_{\rho_1 } , \ldots , \alpha_{2 \rho_1 -2}$ appear in the matrix $H [\rho_1 , \rho_1 ]$, we can see that $\mathbf{x}$ is uniquely determined by $H [\rho_1 , \rho_1 ]$ and $\alpha_{2 \rho_1 -1}$. \\

Now, let $R_1 , \ldots , R_n$ be the rows of $H$, and let us apply the following row operations:
\begin{align*}
R_i \longrightarrow R_i - (x_0 , \ldots , x_{\rho_1 -1}) \begin{pmatrix} R_{i - \rho_1 } \\ \vdots \\ R_{i-1} \end{pmatrix} 
	= R_i - x_0 R_{i - \rho_1 } - \ldots - x_{\rho_1 -1} R_{i-1} 
\end{align*}
for $i = n , n -1 , \ldots , \rho_1 +1$ in that order. We then obtain a matrix of the form
\begin{align*}
\begin{pmatrix}
H [\rho_1 , \rho_1] & \vline & H [\rho_1 , -(m-\rho_1)] \\
\hline
\mathbf{0} & \vline & 
\begin{NiceMatrix}
0 & \Cdots & \Cdots & \Cdots & \Cdots & \Cdots & \Cdots & 0 \\
\Vdots &  &  &  &  &  &  & \Vdots \\
0 & \Cdots & \Cdots & \Cdots & \Cdots & \Cdots & \Cdots & 0 \\
0 & \Cdots & \Cdots & \Cdots & \Cdots & \Cdots & 0 & \lambda_1 \\
0 & \Cdots & \Cdots & \Cdots & \Cdots & 0 & \lambda_1 & \beta_2 ' \\
0 & \Cdots & \Cdots & \Cdots & 0 & \lambda_1 & \beta_2 ' & \beta_3 ' \\
\Vdots &  &  & \Iddots & \Iddots & \Iddots & \Iddots & \Vdots \\
0 & \Cdots & 0 & \lambda_1 & \beta_2 ' & \beta_3 ' & \Cdots & \beta_{\pi_1} ' 
\end{NiceMatrix}
\end{pmatrix} ,
\end{align*}
where $\lambda_1 \in \mathbb{F}_q^*$ and $\beta_2 ' , \ldots , \beta_{\pi_1 } ' \in \mathbb{F}_q$. An almost identical result was proven in Subsection 2.2 of \cite{Yiasemides2021_VariCorrDivFuncFpTHankelMatr_ArXiv_v2}. The only difference being that in \cite{Yiasemides2021_VariCorrDivFuncFpTHankelMatr_ArXiv_v2} they worked with the \rhopi-characteristic instead of the strict \rhopi-characteristic. \\

Now, let $C_1 , \ldots , C_n$ be the rows of $H$, and let us apply the analogous column operations:
\begin{align*}
C_i \longrightarrow C_i - (x_0 , \ldots , x_{\rho_1 -1}) \begin{pmatrix} C_{i - \rho_1 } \\ \vdots \\ C_{i-1} \end{pmatrix} 
	= C_i - x_0 C_{i - \rho_1 } - \ldots - x_{\rho_1 -1} C_{i-1} 
\end{align*}
for $i = n , n -1 , \ldots , \rho_1 +1$ in that order. By similar reasoning as above, we obtain a matrix of the form
\begin{align*}
\begin{pmatrix}
H [\rho_1 , \rho_1] & \vline & \mathbf{0} \\
\hline
\mathbf{0} & \vline & 
\begin{NiceMatrix}
0 & \Cdots & \Cdots & \Cdots & \Cdots & \Cdots & \Cdots & 0 \\
\Vdots &  &  &  &  &  &  & \Vdots \\
0 & \Cdots & \Cdots & \Cdots & \Cdots & \Cdots & \Cdots & 0 \\
0 & \Cdots & \Cdots & \Cdots & \Cdots & \Cdots & 0 & \lambda_1 \\
0 & \Cdots & \Cdots & \Cdots & \Cdots & 0 & \lambda_1 & \beta_2 \\
0 & \Cdots & \Cdots & \Cdots & 0 & \lambda_1 & \beta_2 & \beta_3 \\
\Vdots &  &  & \Iddots & \Iddots & \Iddots & \Iddots & \Vdots \\
0 & \Cdots & 0 & \lambda_1 & \beta_2 & \beta_3 & \Cdots & \beta_{\pi_1} 
\end{NiceMatrix}
\end{pmatrix} ,
\end{align*}
where $\beta_2 , \ldots , \beta_{\pi_1 } \in \mathbb{F}_q$. We can express the above in form
\begin{align} \label{statement, reduce rho pi form section, matrix after first iteration}
\begin{pmatrix}
H [\rho_1 , \rho_1] & \vline & \mathbf{0} \\
\hline
\mathbf{0} & \vline & H_1
\end{pmatrix}
=
\begin{pmatrix}
H [\rho_1 , \rho_1] & \vline & \mathbf{0} & \vline & \mathbf{0} \\
\hline
\mathbf{0} & \vline & H_1 '' & \vline & \mathbf{0} \\
\hline
\mathbf{0} & \vline & \mathbf{0} & \vline & H_1 ' \\
\end{pmatrix} ,
\end{align}
where
\begin{align*}
H_1 '
= \begin{pNiceMatrix}
0 & \Cdots & \Cdots & 0 & \lambda_1 \\
0 & \Cdots & 0 & \lambda_1 & \beta_2 \\
\Vdots & \Iddots & \Iddots & \Iddots & \Vdots \\
0 & \lambda_1 & \beta_2 & \Cdots & \beta_{\pi_1 -1} \\
\lambda_1 & \beta_2 & \Cdots & \Cdots & \beta_{\pi_1} 
\end{pNiceMatrix} ,
\end{align*}
$H_1 ''$ is the $(n-r) \times (n-r)$ zero matrix (where $r = \rank (H)$), and
\begin{align*}
H_1
= \begin{pmatrix}
H_1 '' & \vline & \mathbf{0} \\
\hline
\mathbf{0} & \vline & H_1 '
\end{pmatrix} .
\end{align*}
It may be helpful to recall that $r = \rank (H) = \rho_1 + \pi_1$ (by definition of the strict \rhopi-characteristic) and that is why the number of rows/columns of $H''$ is $n- \rho_1 - \pi_1 = n-r$. It is important to note for later than $H_1 '$ is a lower skew-triangular square Hankel matrix with non-zero main skew-diagonal, which we will simply refer to as a non-strict lower skew-triangular Hankel matrix. Of course, in the special case where $H$ has full rank, the matrix $H_1 ''$ should be ignored; and in the special case where $\pi_1 = 0$, the matrix $H_1 '$ should be ignored. This can be achieved by viewing them as the ``$0 \times 0$ empty matrix'', which is consistent with their defined dimensions. \\

Now, we can apply the above to the matrix $H [\rho_1 , \rho_1]$ instead of $H$. In fact, we can continue in an inductive manner to ultimately obtain a matrix of the form
\begin{align} \label{statement, ES rho pi form of H}
\begin{pNiceMatrix}
H_t & \mathbf{0} & \Cdots &  &  & \mathbf{0} \\
\mathbf{0} & H_{t-1} & \Ddots &  &  & \Vdots \\
\Vdots & \Ddots & \Ddots &  &  &  \\
 &  &  & H_2 & \mathbf{0} & \mathbf{0} \\
 &  &  & \mathbf{0} & H_1 '' & \mathbf{0} \\
\mathbf{0} & \Cdots &  & \mathbf{0} & \mathbf{0} & H_1 '
\end{pNiceMatrix} ,
\end{align}
where $H_1'$ and $H_1''$ are as described previously, and $H_2 , \ldots , H_t$ are non-strict lower skew-triangular Hankel matrices. 

\begin{definition}[Reduced \rhopi-form and Stage 1 \rhopi-form]
We define the reduced \rhopi-form of $H$ to be the matrix (\ref{statement, ES rho pi form of H}). This applies when $\rho_s (H) \geq 1$. When $\rho_s (H) = 0$, we define the reduced \rhopi-form of $H$ to be $H$, as mentioned at the start of this subsection. We denote the reduced \rhopi-form by $\reduced (H)$. \\

It is also necessary at times to work with (\ref{statement, reduce rho pi form section, matrix after first iteration}), which is the first stage in our process for obtaining the reduced \rhopi-form. Thus, we define the Stage 1 \rhopi-form of $H$ to be (\ref{statement, reduce rho pi form section, matrix after first iteration}). Again, this applies when $\rho_s (H) \geq 1$. When $\rho_s (H) = 0$, we simply take the Stage 1 \rhopi-form to be $H$. We denote the Stage 1 \rhopi-form by $\reduced_1 (H)$.
\end{definition}

\begin{definition}[Reduced Matrix]
A reduced \rhopi-form matrix is a matrix of the form (\ref{statement, ES rho pi form of H}).\footnote{That is, a block-diagonal square matrix, where each block is a non-strict lower skew-triangular Hankel matrix, except for $H_1 ''$ which is the zero matrix. Naturally, $t$ can take any positive integer value (if $t=1$, then there are no blocks $H_2 , H_3 , \ldots$); the number of rows/columns of $H_2 , \ldots , H_t$ can each take any positive integer value; and the number of rows/columns of $H_1 '$ and $H_1 ''$ can each take any non-negative integer value, but at least one must be positive.} For convenience, in this paper where it will not be confused with (for example) reduced row echelon matrices, we will simply refer to such matrices as reduced matrices. We denote the set of all $n \times n$ reduced matrices by $\mathscr{R}_n$.
\end{definition}

\begin{definition}[\rhopi-partition] \label{definition, rho pi partition}
Suppose $M$ is a reduced matrix of the form (\ref{statement, ES rho pi form of H}), and let
\begin{align*}
p_1 ' , p_1 '' , p_2 , \ldots , p_t
\end{align*}
be the number of rows/columns of $H_1 ' , H_1 '' , H_2 , \ldots , H_t$. We then define the \rhopi-partition of $M$ to be
\begin{align*}
\partition (M) 
:= (p_1 ' , p_1 '' , p_2 , \ldots , p_t ) .
\end{align*}
Now suppose we have a square Hankel matrix $H$. We define the \rhopi-partition of $H$ to be the \rhopi-partition of $\reduced (H)$. We similarly denote this as $\partition (H)$. \\

Note that $\rho_s (H) = p_2 + \ldots p_t$ and $\pi_s (H) = p_1'$, and $\rank (H) = p_1' + p_2 + \ldots p_t$. \\

We denote the set of all possible \rhopi-partitions associated to $n \times n$ Hankel matrices by $\mathscr{P}_n$. That is,
\begin{align*}
\mathscr{P}_n
:= \{ \partition (H) : H \in \mathscr{H}_{n,n} \} .
\end{align*}
\end{definition}

\subsection{Counting Matrices}

In this subsection, we are interested in the number of Hankel matrices that have a given reduced \rhopi-form, and the number of reduced matrices that have a given \rhopi-partition. Thus, let us make the following definitions.

\begin{definition}
For a \rhopi-partition $P \in \mathscr{P}_n$, we define
\begin{align*}
S_{\reduced } (P)
:= \{ M \in \mathscr{R}_n : \partition (M) = P \} ,
\end{align*}
and
\begin{align*}
S_{\hank } (P)
:= \{ H \in \mathscr{H}_{n,n} : \partition (H) = P \} .
\end{align*}

For a reduced matrix $M \in \mathscr{R}_n$, we define 
\begin{align*}
S_{\hank } (M)
:= \{ H \in \mathscr{H}_{n,n} : \reduced (H) = M \} .
\end{align*}
\end{definition}

\begin{lemma} \label{lemma, number of reduced matrices with given partition}
Suppose we have a \rhopi-partition $P = (p_1 ' , p_1 '' , p_2 , \ldots , p_t )$, and let $r = p_1 ' + p_2 + \ldots + p_t$. Then,
\begin{align*}
\lvert S_{\reduced } (P) \rvert
= (q-1)^t q^{r-t} .
\end{align*}
\end{lemma}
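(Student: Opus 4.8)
The plan is to count the reduced matrices of the form (\ref{statement, ES rho pi form of H}) directly by counting the free parameters in each diagonal block. Recall that a reduced matrix in $S_{\reduced}(P)$ is a block-diagonal matrix with blocks $H_t, H_{t-1}, \ldots, H_2, H_1'', H_1'$ (in that order), where $H_1''$ is the $p_1'' \times p_1''$ zero matrix and each of $H_1', H_2, \ldots, H_t$ is a \emph{non-strict} lower skew-triangular square Hankel matrix, of sizes $p_1', p_2, \ldots, p_t$ respectively. So the only freedom in specifying such a matrix, once $P$ is fixed, lies in the choice of the entries of these $t$ non-strict lower skew-triangular Hankel blocks; the zero block $H_1''$ contributes nothing, and the block structure (sizes and positions) is entirely determined by $P$.

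First I would observe that a non-strict lower skew-triangular $k \times k$ Hankel matrix is determined by the $k$ entries on and below... more precisely, by the entries $\lambda, \beta_2, \ldots, \beta_k$ appearing on its skew-diagonals: the main (longest) skew-diagonal is the constant $\lambda$, and then the successive lower skew-diagonals are the constants $\beta_2, \ldots, \beta_k$, while all entries strictly above the main skew-diagonal are zero. Thus such a block is parametrised by a tuple $(\lambda, \beta_2, \ldots, \beta_k) \in \mathbb{F}_q^* \times \mathbb{F}_q^{k-1}$, since the ``non-strict'' condition forces $\lambda \neq 0$ (this is exactly the shape of $H_1'$ displayed in the excerpt, and the blocks $H_2, \ldots, H_t$ have the same form). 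Hence each such block of size $k$ can be chosen in $(q-1) q^{k-1}$ ways.

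Next I would multiply over the $t$ blocks $H_1', H_2, \ldots, H_t$, whose sizes are $p_1', p_2, \ldots, p_t$. This gives
\begin{align*}
\lvert S_{\reduced}(P) \rvert
= \prod_{\text{blocks}} (q-1) q^{(\text{size})-1}
= (q-1)^t \, q^{(p_1' + p_2 + \cdots + p_t) - t}
= (q-1)^t q^{r-t},
\end{align*}
using the definition $r = p_1' + p_2 + \cdots + p_t$. The zero block $H_1''$ of size $p_1''$ contributes a factor of $1$ and so does not appear. One should also check the degenerate cases flagged in the footnote to the definition of reduced matrix: if $t = 1$ there are no blocks $H_2, \ldots, H_t$ and the formula reads $(q-1) q^{p_1' - 1}$, which is correct (and if moreover $p_1' = 0$ then $M$ is the zero matrix, $r = 0$, $t = 1$, and the formula gives $(q-1) q^{-1}$ — so one should note that the partition $(0, p_1'', 0, \ldots)$ with $t \geq 2$ is not in $\mathscr{P}_n$, i.e. $P$ ranging over genuine $\rho\pi$-partitions avoids this, or handle the all-zero partition $t=1, p_1'=0$ separately as the single zero matrix, consistent with $\rho_s = 0$ being a separate case in the definition of $\reduced$).

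The main obstacle, such as it is, is purely bookkeeping: being careful about exactly which blocks are ``non-strict lower skew-triangular Hankel'' (namely $H_1', H_2, \ldots, H_t$, giving $t$ of them) versus the single zero block $H_1''$, and confirming that the ``non-strict'' condition is precisely the requirement $\lambda \in \mathbb{F}_q^*$ on each such block's main skew-diagonal so that the count per block is $(q-1)q^{k-1}$ rather than $q^k$. Once the parametrisation of a single non-strict lower skew-triangular Hankel block is pinned down — which is immediate from the displayed form of $H_1'$ — the rest is just taking a product and collecting exponents.
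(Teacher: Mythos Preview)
Your proof is correct and follows essentially the same approach as the paper: both count the choices for each non-strict lower skew-triangular Hankel block as $(q-1)q^{k-1}$ (one $\mathbb{F}_q^*$-choice for the main skew-diagonal, $k-1$ free $\mathbb{F}_q$-choices below it), note that the zero block $H_1''$ contributes a factor of $1$, and multiply over the $t$ blocks to obtain $(q-1)^t q^{r-t}$. Your extra remarks on the degenerate case $p_1' = 0$ go beyond what the paper addresses, but the core argument is identical.
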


\begin{proof}
Suppose we have a reduced matrix $M$ with $\partition (M) = P$. Then, $M$ has the form
\begin{align*}
\begin{pNiceMatrix}
H_t & \mathbf{0} & \Cdots &  &  & \mathbf{0} \\
\mathbf{0} & H_{t-1} & \Ddots &  &  & \Vdots \\
\Vdots & \Ddots & \Ddots &  &  &  \\
 &  &  & H_2 & \mathbf{0} & \mathbf{0} \\
 &  &  & \mathbf{0} & H_1 '' & \mathbf{0} \\
\mathbf{0} & \Cdots &  & \mathbf{0} & \mathbf{0} & H_1 '
\end{pNiceMatrix} .
\end{align*}
There is only one value that the zero matrix $H_1 ''$ can take. As for the other blocks, we must determine how many non-strict lower skew-symmetric matrices there are of a given size, say $l \times l$. Clearly, this is $(q-1) q^{l-1}$, because the main skew-diagonal can take values in $\mathbb{F}_q^*$, while the $l-1$ skew-diagonals below that can take values in $\mathbb{F}_q$. From this, we determine that our answer is
\begin{align*}
\lvert S_{\reduced } (P) \rvert 
= (q-1)^t q^{r-t} .
\end{align*}
\end{proof}

\begin{lemma} \label{lemma, number of Hankel matrices with given reduced matrix}
Suppose we have a reduced matrix $M$ with \rhopi-partition equal to \\$(p_1 ' , p_1 '' , p_2 , \ldots , p_t )$. Then,
\begin{align*}
\lvert S_{\hank } (M) \rvert
= q^{t-1} .
\end{align*}
\end{lemma}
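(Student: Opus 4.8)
The plan is to argue by induction on $t$, the number of non-strict lower skew-triangular blocks appearing in the reduced \rhopi-form, exploiting that the reduction procedure of Subsection~\ref{subsection, reduced rho pi form} can be run backwards, with a single free scalar entering at its top stage. For the base case $t=1$ we have $\partition(M)=(p_1',p_1'')$, so any $H\in S_{\hank}(M)$ has $\rho_s(H)=0$ by Definition~\ref{definition, rho pi partition}, and hence $\reduced(H)=H$ by the convention fixed at the start of Subsection~\ref{subsection, reduced rho pi form}; thus $H=M$, and since $M$ (being block-diagonal with a zero block and a non-strict lower skew-triangular Hankel block) is itself a Hankel matrix, $S_{\hank}(M)=\{M\}$ and $\lvert S_{\hank}(M)\rvert=1=q^{0}$.

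For the inductive step, $t\ge 2$, set $\rho_1:=p_2+\ldots+p_t$; then $1\le\rho_1\le n-1$, and we may write $M=\operatorname{diag}(\tilde M,H_1'',H_1')$ where $\tilde M:=\rpblock(H_t,\ldots,H_2)$ is a $\rho_1\times\rho_1$ reduced matrix with \rhopi-partition $(p_2,0,p_3,\ldots,p_t)$, having $t-1$ non-strict lower skew-triangular blocks; by the inductive hypothesis, $\lvert S_{\hank}(\tilde M)\rvert=q^{t-2}$. For an invertible $\rho_1\times\rho_1$ Hankel matrix $B$, write $K_B:=\operatorname{diag}(B,H_1'',H_1')$, an $n\times n$ matrix. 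I claim that the map $H\mapsto(H[\rho_1,\rho_1],\,\alpha_{2\rho_1-1})$, where $H=(\alpha_{i+j-2})_{1\le i,j\le n}$, is a bijection $S_{\hank}(M)\to S_{\hank}(\tilde M)\times\mathbb{F}_q$. Granting this, $\lvert S_{\hank}(M)\rvert=q^{t-2}\cdot q=q^{t-1}$, which closes the induction.

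To verify the claim: if $H\in S_{\hank}(M)$ then $\partition(H)=\partition(M)$ forces $\rho_s(H)=\rho_1$, $H[\rho_1,\rho_1]\in S_{\hank}(\tilde M)$, and $\reduced_1(H)=K_{H[\rho_1,\rho_1]}$; moreover, as recalled in Subsection~\ref{subsection, reduced rho pi form}, the transition vector $\mathbf{x}$ solving $H[\rho_1,\rho_1]\mathbf{x}=(\alpha_{\rho_1},\ldots,\alpha_{2\rho_1-1})^T$ is determined by $H[\rho_1,\rho_1]$ together with $\alpha_{2\rho_1-1}$ alone, and $\reduced_1(H)$ is obtained from $H$ by the invertible row and column operations attached to $\mathbf{x}$. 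Hence $H$ is recovered from the pair by computing $\mathbf{x}$ and then applying the inverse row and column operations to $K_{H[\rho_1,\rho_1]}$, which gives injectivity. For surjectivity, given $B\in S_{\hank}(\tilde M)$ and $s\in\mathbb{F}_q$, build $\mathbf{x}$ from $B$ and $s$, and let $H$ be the output of the inverse operations applied to $K_B$; one checks that $H[\rho_1,\rho_1]=B$ and $\alpha_{2\rho_1-1}=s$ (the operations fix the leading $\rho_1\times\rho_1$ block, and $\mathbf{x}$ is built from $(\alpha_{\rho_1},\ldots,\alpha_{2\rho_1-2},s)^T$), that running the forward reduction on $H$ uses this same $\mathbf{x}$ and so returns $K_B$, and that $\rho_s(H)=\rho_1$ — this last because the operations are unipotent and lower-triangular, so the leading $(\rho_1+j)\times(\rho_1+j)$ minor of $H$ equals $\det(B)$ times the corresponding leading $j\times j$ minor of $\operatorname{diag}(H_1'',H_1')$, which is singular for every $1\le j\le n-\rho_1-1$. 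It then follows that $\reduced(H)=\operatorname{diag}(\reduced(B),H_1'',H_1')=M$.

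The main obstacle is the single genuinely matrix-theoretic fact used for surjectivity: that applying the inverse row and column operations to $K_B$ returns a matrix that is again Hankel — equivalently, that the construction of Subsection~\ref{subsection, reduced rho pi form} is reversible within the class of Hankel matrices. I would prove this by tracking that construction against the filtration of a square matrix by its skew-diagonals: the operations involved are unipotent and respect this filtration, so prescribing the Stage~1 \rhopi-form together with the scalar $\alpha_{2\rho_1-1}$ pins down each successive skew-diagonal of $H$ uniquely and unobstructedly; equivalently, the composite operation is multiplication on the left, and on the right by the transpose, by a fixed polynomial in the shift matrix, which visibly preserves the Hankel property. Everything else — the minor computation fixing $\rho_s(H)$, the bookkeeping of block sizes and partitions, and the base case — is routine.
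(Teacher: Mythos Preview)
Your proof is correct and follows essentially the same approach as the paper: both argue that exactly one free scalar $\alpha_{2\rho_1-1}$ enters at each of the $t-1$ reduction stages. You have formalised this as an induction with an explicit bijection $S_{\hank}(M)\to S_{\hank}(\tilde M)\times\mathbb{F}_q$ and filled in details the paper leaves implicit (notably the verification that $\rho_s(H)=\rho_1$ via the leading-minor computation). One small caveat: your aside that the composite operation is conjugation by a polynomial in the shift matrix is not literally correct, since the row operations $R_i\to R_i-\sum_k x_k R_{i-\rho_1+k}$ are applied only for $i\ge\rho_1+1$, so the operation matrix $P$ agrees with $I-\sum_k x_{\rho_1-1-k}S^{k+1}$ only below row $\rho_1$; however, your filtration argument --- that the $(i,j)$-entry of the Stage~1 form equals $\alpha_{i+j-2}$ plus a function of $\alpha_k$ with $k<i+j-2$, so the skew-diagonals of $H$ are determined successively --- is correct and is all that is needed.
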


\begin{proof}
Let $H = (\alpha_{i+j-2} )_{1 \leq i , j \leq n}$ be a Hankel matrix such that $\reduced (H) = M$. We must show that there are $q^{t-1}$ possible values that $H$ can take. To prove this, we must look back at how we obtain $\reduced (H)$ from $H$. \\

Consider the matrix (\ref{statement, reduce rho pi form section, matrix after first iteration}) which we obtain after applying row and column operations on $H$. There are exactly $q$ values that $H$ can take that will leave us with (\ref{statement, reduce rho pi form section, matrix after first iteration}) after applying the rows and column operations. To see this, first note that $H [\rho_1 , \rho_1 ]$ is also the top-left $\rho_1 \times \rho_1$ submatrix of $H$, and so this determines the values of $\alpha_0 , \alpha_1 , \ldots , \alpha_{2 \rho_1 - 2}$. The value of $\alpha_{2 \rho_1 -1}$ is undetermined and free to take any value in $\mathbb{F}_q$. Recall that $\mathbf{x}$ is determined by $H [\rho_1 , \rho_1 ]$ and $\alpha_{2 \rho_1 -1}$; and it is not difficult to see from our row and column operations that $\alpha_{2 \rho_1} , \ldots , \alpha_{2n-2}$ are determined by $\mathbf{x}$, $H [\rho_1 , \rho_1 ]$, $\alpha_{2 \rho_1 -1}$, and $H_1 '$ and $H_1 ''$. Thus, it is only $\alpha_{2 \rho_1 -1}$ that gives us a degree of freedom, with $q$ possible values that it could take. \\

Now, obtaining (\ref{statement, reduce rho pi form section, matrix after first iteration}) is only the first step in our procedure of obtaining $\reduced (H)$ from $H$. Indeed, we apply similar steps a total number of $t-1$ times. Thus, it is not difficult to deduce that there $q^{t-1}$ possible values that $H$ could take.
\end{proof}

\subsection{Multisets}

We will need to introduce some notation. A multiset is similar to a set, but repetition of elements is ``allowed''. For example, $\{ 1 , 2 \}$ and $\{ 1 , 2 , 2 \}$ are the same sets, but when viewed as multisets, they are different. To distinguish we use the conventional notation of square brackets to represent multisets. For example, $[ 1 , 2 ]$ and $[1 , 2 , 2]$. In this paper, all multisets that we work with are finite. \\

The multiplicity of an element in a multiset is defined to be the number of times it appears in that multiset. For a multiset $A$, we denote the multiplicity of $a$ in $A$ by $m_A (a)$. Obviously, if $a$ does not appear in $A$, then we take $m_A (a) = 0$. The function $m_A$ is called the multiplicity function of $A$. \\

In this paper, regarding the union of two multisets $A,B$, we define $A \cup B$ to be the multiset $C$ define by the multiplicity function $m_C := m_A + m_B$. That is, if $a$ appears $n_1$ times in $A$ and $n_2$ times in $B$, then $a$ appears $n_1 + n_2$ times in $C$. This is non-standard terminology and notation. Our definition of the union of $A$ and $B$ is what is typically referred to as the sum of $A$ and $B$. However, we will be working with sumsets of multisets, and so we must reserve ``sum'' and ``$+$'' for that. \\

We use the following notation for multisets that we will use often. We define $F_q$ to be the multiset where each element of $\mathbb{F}_q$ appears exactly once. We define $T_q$ to be the multiset where $0$ appears $2q-1$ times and each element of $\mathbb{F}_q^*$ appears $q-1$ times. For positive integers $n$, we define $Z_q (n)$ to be the multiset where $0 \in \mathbb{F}_q$ appears $n$ times. Finally, for $\lambda \in \mathbb{F}_q$, we define $S_q (\lambda ) := [ \lambda a^2 : a \in \mathbb{F}_q ]$. \\

Now let us define the sumset. For subsets $A,B$ of an abelian group, we define the sum set
\begin{align*}
A + B = \{ a+b : a \in A , b \in B \} . 
\end{align*}
For multisets $A,B$, whose elements lie in an abelian group, we define the sumset to be the multiset
\begin{align*}
A + B = [ a+b : a \in A , b \in B ] . 
\end{align*}
Of course, multiplicity is allowed in the ranges $a \in A , b \in B$, and multiplicity is counted for the values $a+b$. That is, $m_{A+B} (c) = \sum_{a+b=c} m_A (a) m_B (b)$. \\

For non-negative integers $n$, we define 
\begin{align*}
nA := \underbrace{A+A+ \ldots + A}_{\text{$A$ appears $n$ times here}}
\end{align*}
and we take the convention that $0A = [0]$. \\

Now, let $H$ be an $n \times n$ Hankel matrix. In this section, we wish to understand, for each $a \in \mathbb{F}_q$, how many $\mathbf{v} \in \mathbb{F}_q^{n-1} \times \{ 1 \}$ there are such that $\mathbf{v}^T H \mathbf{v} = a$. This leads us to define the following three multisets. \\

First, though, for a non-zero vector $\mathbf{w}$ over $\mathbb{F}_q$, let us define $l (\mathbfe{w})$ to be the last non-zero entry of $\mathbf{w}$. That is, if $\mathbf{w}^T = (w_1 , \ldots , w_k , 0 \ldots , 0)$ with $w_k \neq 0$, then $l(\mathbf{w}) = w_k$. If $\mathbf{w}$ is a zero vector, then we define $l (\mathbf{w}) = 0$. \\

\begin{definition}
For an $n \times n$ matrix $M$, we define the multisets
\begin{align*}
\multisetmatrix (M)
:= \Big[ \mathbf{v}^T M \mathbf{v} : \mathbf{v} \in \mathbb{F}_q^n \Big]
\end{align*}
and
\begin{align*}
\multisetmatrix_{\mathcal{M}} (M)
:= \Big[ \mathbf{v}^T M \mathbf{v} : \mathbf{v} \in \mathbb{F}_q^{n-1} \times \{ 1 \} \Big] 
\end{align*}
and
\begin{align*}
\multisetmatrix_{1} (M)
:= \Big[ \mathbf{v}^T M \mathbf{v} : \mathbf{v} \in \mathbb{F}_q^{n} , l (\mathbf{v}) = 1 \Big]
= \bigcup_{i=1}^{n} \multisetmatrix_{\mathcal{M}} \big( M [i;i] \big) .
\end{align*}
\end{definition}

\begin{lemma} \label{lemma, multiset H equals multiset reduced H}
For a square Hankel matrix $H$, we have that
\begin{align*}
\multisetmatrix (H)
= \multisetmatrix \Big( \reduced (H) \Big)
\end{align*}
and
\begin{align*}
\multisetmatrix_{\mathcal{M}} (H)
= \multisetmatrix_{\mathcal{M}} \Big( \reduced (H) \Big)
\end{align*}
and
\begin{align*}
\multisetmatrix_{1} (H)
= \multisetmatrix_{1} \Big( \reduced (H) \Big) .
\end{align*}
\end{lemma}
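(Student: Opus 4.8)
The plan is to reduce everything to the effect of the row and column operations used to build $\reduced(H)$ from $H$, and to check that these operations are compatible with the three quadratic‑form multisets. Recall from Subsection \ref{subsection, reduced rho pi form} that $\reduced(H)$ is obtained from $H$ by a sequence of paired operations: at each stage we pick the same linear combination of rows and subtract it (in the ``$R_i \to R_i - x_0 R_{i-\rho_1} - \ldots - x_{\rho_1-1}R_{i-1}$'' pattern, for $i = n, n-1, \ldots, \rho_1+1$), and then perform the \emph{identical} combination on the columns. Writing $P$ for the (unipotent, lower‑triangular) matrix implementing this batch of row operations, the key observation is that the matrix after one full stage is exactly $P H P^T$: the row operations give $PH$, and because the column operations use the same coefficients $x_0, \ldots, x_{\rho_1-1}$ in the same arrangement, they amount to right multiplication by $P^T$. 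Iterating over all $t-1$ stages, there is a single invertible matrix $U$ (a product of such unipotent $P$'s, hence invertible) with $\reduced(H) = U H U^T$.

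Granting this, the three equalities follow from a change of variable. For $\multisetmatrix$: as $\mathbf v$ runs over $\mathbb F_q^n$, so does $\mathbf w := U^T \mathbf v$ (since $U^T$ is invertible), and $\mathbf v^T (U H U^T) \mathbf v = (U^T\mathbf v)^T H (U^T \mathbf v) = \mathbf w^T H \mathbf w$; thus the multisets $[\mathbf v^T \reduced(H)\mathbf v]$ and $[\mathbf w^T H \mathbf w]$ coincide, with multiplicities matching because $\mathbf v \mapsto U^T\mathbf v$ is a bijection. For $\multisetmatrix_{\mathcal M}$ and $\multisetmatrix_1$ one needs the additional fact that $U^T$ preserves the relevant subsets of $\mathbb F_q^n$: namely, that $\mathbf v \in \mathbb F_q^{n-1}\times\{1\}$ implies $U^T\mathbf v \in \mathbb F_q^{n-1}\times\{1\}$, and more generally that $l(U^T\mathbf v) = l(\mathbf v)$. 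This is where the precise \emph{shape} of $U$ matters: the $P$'s built above are unipotent and \emph{lower} triangular (each $R_i$ is modified only by earlier rows $R_{i-\rho_1},\ldots,R_{i-1}$), hence so is $U$, and therefore $U^T$ is unipotent \emph{upper} triangular. An upper‑triangular unipotent matrix fixes the last nonzero coordinate of any vector: if $v_k \neq 0$ and $v_{k+1} = \cdots = v_n = 0$, then $(U^T\mathbf v)_k = v_k$ and $(U^T\mathbf v)_j = 0$ for $j > k$. In particular it maps $\mathbb F_q^{n-1}\times\{1\}$ bijectively onto itself and preserves $l(\cdot)$, which gives the $\multisetmatrix_{\mathcal M}$ and $\multisetmatrix_1$ statements at once (for the last one, one can alternatively just invoke the decomposition $\multisetmatrix_1(M) = \bigcup_{i=1}^n \multisetmatrix_{\mathcal M}(M[i;i])$ recorded in the definition, but one still needs $U^T$ to respect it, so the triangularity argument is cleanest).

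The main obstacle is the bookkeeping behind the claim $\reduced(H) = U H U^T$ with $U$ lower‑triangular unipotent. One has to verify carefully that (a) each stage's row operations and column operations genuinely use the same coefficient vector $\mathbf x$ applied in the same index pattern — which is exactly how the construction in Subsection \ref{subsection, reduced rho pi form} is set up, the column step being described as ``the analogous column operations'' — so that the stage map is $H \mapsto PHP^T$ rather than $H \mapsto PHQ$ for some unrelated $Q$; and (b) that the index range $i = n, n-1, \ldots, \rho_1+1$, performed in that order, really does assemble into a single lower‑triangular unipotent $P$ (the ``in that order'' matters only for the intermediate entries, not for the triangular shape, since every elementary step subtracts a strictly‑earlier row from a later one). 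A clean way to organise this is to induct on $t$: the base case $t=1$ is the stage producing \eqref{statement, reduce rho pi form section, matrix after first iteration}, where the top‑left block $H[\rho_1,\rho_1]$ is untouched and one checks directly that $\reduced_1(H) = P_1 H P_1^T$; then apply the inductive hypothesis to $H[\rho_1,\rho_1]$, noting that conjugating a block by a matrix acting only on that block extends to conjugating the whole matrix by the block‑diagonal (still lower‑triangular unipotent) extension. One should also dispatch the degenerate cases $\rho_s(H) = 0$ (where $\reduced(H) = H$ and $U = I$) and full‑rank or $\pi_1 = 0$ situations separately, though these are immediate. Once $U$ is in hand, the three multiset identities are one line each.
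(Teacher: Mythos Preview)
Your proposal is correct and follows essentially the same approach as the paper: both identify an invertible matrix $U$ with $\reduced(H) = U H U^T$ and argue by a change of variable, using the special structure of $U$ to handle the restricted domains. The only noteworthy difference is in the third claim: you prove $\multisetmatrix_1(H) = \multisetmatrix_1(\reduced(H))$ directly by observing that $U$ is lower-triangular unipotent, so $U^T$ preserves $l(\cdot)$, whereas the paper instead invokes the decomposition $\multisetmatrix_1(M) = \bigcup_i \multisetmatrix_{\mathcal M}(M[i;i])$ together with the identity $\reduced(H[i;i]) = \reduced(H)[i;i]$ and then applies the second claim to each $H[i;i]$. Your triangularity argument is arguably the cleaner route, since it handles $\multisetmatrix_{\mathcal M}$ and $\multisetmatrix_1$ uniformly without needing the auxiliary compatibility of $\reduced$ with top-left submatrices; the paper's route, on the other hand, avoids tracking the cumulative shape of $U$ across all stages and only inspects the single operation $P_1$ explicitly.
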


\begin{proof}
We obtained $\reduced (H)$ from $H$ by applying row and column operations. Let $P_i$ be the matrix associated with the $i$-th row operation that we applied, and suppose that there were $k$ operations in total. Recall that column operations were the exact analogue of the row operations. In particular, we have
\begin{align*}
\reduced (H) = (P_k P_{k-1} \ldots P_1 ) H (P_k P_{k-1} \ldots P_1 )^T .
\end{align*}
We can write the above as
\begin{align*}
H = P \reduced (H) P^T ,
\end{align*}
where $P = (P_k P_{k-1} \ldots P_1 )^{-1}$. \\

So, we have that 
\begin{align*}
\mathbf{v}^T H \mathbf{v}
= (P^T \mathbf{v})^T \reduced (H) (P^T \mathbf{v}) .
\end{align*}
Since $P$ is invertible, we can easily deduce that
\begin{align*}
\{ P^T \mathbf{v} : \mathbf{v} \in \mathbb{F}_q^n \}
= \mathbb{F}_q^n .
\end{align*}
Thus, 
\begin{align*}
\multisetmatrix (H)
= &\Big[ \mathbf{v}^T H \mathbf{v} : \mathbf{v} \in \mathbb{F}_q^n \Big]
= \Big[ (P^T \mathbf{v})^T \reduced (H) (P^T \mathbf{v}) : \mathbf{v} \in \mathbb{F}_q^n \Big] \\
= &\Big[ \mathbf{v}^T \reduced (H) \mathbf{v} : \mathbf{v} \in \mathbb{F}_q^n \Big]
= \multisetmatrix \Big( \reduced (H) \Big) ,
\end{align*}
which proves the first claim of the Lemma. \\

The proof of the second claim follows in a similar manner. We have
\begin{align*}
\multisetmatrix_{\mathcal{M}} (H)
= &\Big[ \mathbf{v}^T H \mathbf{v} : \mathbf{v} \in \mathbb{F}_q^{n-1} \times \{ 1 \} \Big]
= \Big[ (P^T \mathbf{v})^T \reduced (H) (P^T \mathbf{v}) : \mathbf{v} \in \mathbb{F}_q^{n-1} \times \{ 1 \} \Big] \\
= &\Big[ \mathbf{v}^T \reduced (H) \mathbf{v} : \mathbf{v} \in \mathbb{F}_q^{n-1} \times \{ 1 \} \Big]
= \multisetmatrix_{\mathcal{M}} \Big( \reduced (H) \Big) ,
\end{align*}
where we have used the fact that 
\begin{align*}
\{ P^T \mathbf{v} : \mathbf{v} \in \mathbb{F}_q^{n-1} \times \{ 1 \} \}
= \mathbb{F}_q^{n-1} \times \{ 1 \} .
\end{align*}
Unlike previously, this does not follow only from the fact that $P$ is invertible, as it is not immediately obvious that $P^T \mathbf{v}$ will have a $1$ in its final row simply because $\mathbf{v}$ has a $1$ in the final row. Indeed, this is something we must prove. \\

To do this, recall that $P = (P_k P_{k-1} \ldots P_1 )^{-1}$, and that $P_1$ is the only matrix factor in this product that is associated to a row operation on the final row. That is, it is $P_1$ that determines whether the $P^T \mathbf{v}$ will have a $1$ in its final row when $\mathbf{v}$ does. Now, we have that
\begin{align*}
P_1
= \begin{pmatrix}
I_{n-1} & \vline & \begin{matrix} 0 \\ \vdots \\ 0 \end{matrix} \\
\hline
\begin{matrix} 0 & \cdots & 0 & -x_0 & \cdots & -x_{\rho_1 -1} \end{matrix} & \vline & 1 
\end{pmatrix} ,
\end{align*}
where $\mathbf{x} = ( x_0 , \ldots , x_{\rho_1 -1})^T$ is as in Subsection \ref{subsection, reduced rho pi form}. It is not difficult to now see that $(P_1^{-1} )^T \mathbf{v}$ has a $1$ in the final row when $\mathbf{v}$ does. \\

For the third claim, we have
\begin{align*}
\multisetmatrix_{1} (H)
= &\bigcup_{i=1}^{n} \multisetmatrix_{\mathcal{M}} \big( H [i;i] \big) 
= \bigcup_{i=1}^{n} \multisetmatrix_{\mathcal{M}} \Big( \reduced \big( H [i;i] \big) \Big) \\
= &\bigcup_{i=1}^{n} \multisetmatrix_{\mathcal{M}} \Big( \reduced (H) [i;i] \Big) 
= \multisetmatrix_{1} \Big( \reduced (H) \Big) .
\end{align*}
The first and last equality follow from the fact that $\multisetmatrix_{1} (M) = \bigcup_{i=1}^{n} \multisetmatrix_{\mathcal{M}} \big( M [i;i] \big)$ which we established in the definition of $\multisetmatrix_{1} (M)$. The second equality is simply an application of the second claim. The third equality uses the fact that $\reduced \big( H [i;i] \big) = \reduced (H) [i;i]$, which is not difficult to see from how we obtain the reduced \rhopi-form.
\end{proof}

\begin{lemma} \label{lemma, multiset of nonstrict lower skew triang matr}
Let $M$ be an $l \times l$ non-strict lower skew-triangular Hankel matrix. That is,
\begin{align*}
M
= \begin{pNiceMatrix}
0 & \Cdots & \Cdots & \Cdots & 0 & \lambda \\
\Vdots &  &  & \Iddots & \lambda & \alpha_2 \\
\Vdots &  & \Iddots & \Iddots & \alpha_2 & \alpha_3 \\
\Vdots & \Iddots & \Iddots & \Iddots & \Iddots & \Vdots \\
0 & \lambda & \alpha_2 & \Iddots &  & \Vdots \\
\lambda & \alpha_2 & \alpha_3 & \Cdots & \Cdots & \alpha_l 
\end{pNiceMatrix} ,
\end{align*}
where $\lambda \in \mathbb{F}_q^*$ and $\alpha_2 , \ldots , \alpha_l \in \mathbb{F}_q$. Then,
\begin{align*}
\multisetmatrix (M )
= \begin{cases}
\frac{l}{2} T_q &\text{if $l$ is even,} \\
\frac{l-1}{2} T_q + S_q (\lambda ) &\text{if $l$ is odd;}
\end{cases}
\end{align*}
and
\begin{align*}
\multisetmatrix_{\mathcal{M}} (M)
= \begin{cases}
F_q + \frac{l-2}{2} T_q &\text{if $l$ is even,} \\
F_q + \frac{l-3}{2} T_q + S_q (\lambda ) &\text{if $l \neq 1$ is odd,} \\
[ \lambda ] &\text{if $l = 1$.}
\end{cases}
\end{align*}
\end{lemma}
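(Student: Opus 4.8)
The strategy is to compute $\mathbf{v}^T M \mathbf{v}$ by unravelling the skew-triangular structure one "outermost" pair of coordinates at a time, thereby reducing an $l \times l$ problem to an $(l-2) \times (l-2)$ problem and setting up an induction on $l$. Write $\mathbf{v} = (v_1, v_2, \ldots, v_l)^T$. Because $M$ is lower skew-triangular with anti-diagonal entry $\lambda$, the quadratic form $\mathbf{v}^T M \mathbf{v}$ picks up the cross term $2 \lambda v_1 v_l$ from the corners, plus terms involving $v_1$ paired with $v_{l-1}, v_{l-2}, \ldots$ (via $\lambda, \alpha_2, \ldots$), plus a term $\mathbf{v}'^T M' \mathbf{v}'$ where $\mathbf{v}' = (v_2, \ldots, v_{l-1})^T$ and $M'$ is the $(l-2)\times(l-2)$ non-strict lower skew-triangular Hankel matrix with the same leading entry $\lambda$. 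The key observation is: once $v_1$ is fixed to be nonzero, the remaining dependence on $v_l$ is through a single linear form $2\lambda v_1 v_l + (\text{stuff not involving } v_l)$, and since $2\lambda v_1 \neq 0$ (here I will need $q$ odd, or rather $2 \neq 0$ — I should check whether the paper restricts to odd $q$; if not, the corner term $\lambda v_1 v_l$ in characteristic $2$ still has nonzero coefficient so the argument survives), this linear form ranges over all of $\mathbb{F}_q$ uniformly as $v_l$ ranges over $\mathbb{F}_q$, regardless of the other coordinates. Hence when $v_1 \neq 0$, the contribution to $\multisetmatrix(M)$ is exactly $q^{l-1}$ copies of $\mathbb{F}_q$, i.e. $q^{l-1} F_q$; and when $v_1 = 0$, we reduce to the $(l-2)\times(l-2)$ matrix $M'$ with $v_l$ free, giving $q \cdot \multisetmatrix(M')$.

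So the recursion is $\multisetmatrix(M) = q^{l-1} F_q \cup q\,\multisetmatrix(M')$ for $l \geq 2$, with base cases $\multisetmatrix(M) = S_q(\lambda)$ when $l = 1$ (here $M = (\lambda)$ and $\mathbf v = (v_1)$, so $\mathbf v^T M \mathbf v = \lambda v_1^2$) and $\multisetmatrix(M) = [\,]$-style handling when $l = 0$. One then checks that $q^{l-1} F_q = \frac{1}{2} T_q$-scaled appropriately — actually the cleaner route: note $F_q + F_q$ is, as a multiset, $q$ copies of $0$ together with... hmm, more precisely I want to identify $q^{l-1}F_q$ with the right multiple of $T_q$. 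The clean identity to verify directly is that $2 F_q = T_q$ as multisets? No: $2F_q = [a + b : a, b \in \mathbb{F}_q]$ has $0$ appearing $q$ times and each nonzero $c$ appearing $q$ times, i.e. $2F_q = q F_q'$ where... I'll instead verify the claimed closed forms by induction: assuming the formula for $M'$ (size $l-2$), substitute into $\multisetmatrix(M) = q^{l-1}F_q \cup q\,\multisetmatrix(M')$ and check it matches the stated formula for size $l$, using that $q F_q$ and $\frac{1}{2}T_q$ and $S_q(\lambda)$ combine correctly — in particular one needs the elementary multiset identity expressing how these pieces aggregate, which should just be bookkeeping on multiplicity functions.

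For the $\multisetmatrix_{\mathcal{M}}(M)$ statement, I would argue analogously but now $v_l = 1$ is forced. For $l = 1$ this gives the single value $\lambda \cdot 1^2 = \lambda$, hence $[\lambda]$. For $l \geq 2$, with $v_l = 1$ fixed, split on $v_1$: if $v_1 \neq 0$, the corner term $2\lambda v_1 \cdot 1$ plus the rest makes the value range uniformly over $\mathbb{F}_q$ as the free coordinate $v_1$ (and others) vary — giving $q^{l-2}(q-1)$-ish copies of $F_q$; if $v_1 = 0$, we reduce to $\multisetmatrix_{\mathcal{M}}(M')$ on the $(l-2)\times(l-2)$ block (with $v_{l-1}$ now free) — wait, I need to be careful: with $v_1 = 0$ and $v_l = 1$, the terms $\lambda v_1 v_{l-1}$ etc. vanish, but there may be a term $\lambda v_{l-1} v_l = \lambda v_{l-1}$ surviving, which again is a free linear form, so in fact $v_1 = 0$ also contributes uniformly-distributed values via $v_{l-1}$. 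I expect the correct reduction to again peel two coordinates and land on $\multisetmatrix_{\mathcal M}(M')$ plus copies of $F_q$, yielding the recursion $\multisetmatrix_{\mathcal M}(M) = (\text{copies of } F_q) \cup \multisetmatrix_{\mathcal M}(M')$; alternatively, and perhaps more cleanly, derive $\multisetmatrix_{\mathcal{M}}(M)$ from $\multisetmatrix(M)$ together with $\multisetmatrix(M[l-1;l-1])$ by a counting/inclusion argument, since $\multisetmatrix_{\mathcal M}(M)$ counts vectors with last entry exactly $1$. The main obstacle — and the step I'd spend the most care on — is getting the peeling step exactly right for $\multisetmatrix_{\mathcal M}$: tracking which linear forms survive after fixing $v_l = 1$ and $v_1 = 0$, making sure no accidental degeneracy occurs in small cases ($l = 2$, $l = 3$), and correctly handling the parity split and the characteristic-$2$ subtlety if it arises. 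Everything else is multiplicity-function bookkeeping.
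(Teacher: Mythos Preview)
Your peeling step has the roles of $v_1$ and $v_l$ reversed, and this breaks the argument. In the quadratic form $\mathbf{v}^T M \mathbf{v}$, the first row of $M$ is $(0,\ldots,0,\lambda)$, so the \emph{only} term involving $v_1$ is $2\lambda v_1 v_l$; there are no pairings of $v_1$ with $v_{l-1},v_{l-2},\ldots$ as you claim. By contrast, $v_l$ appears in many terms, including the diagonal term $\alpha_l v_l^2$, so the dependence on $v_l$ is quadratic, not ``a single linear form''. Consequently your case split should be on $v_l$, using $v_1$ as the uniformizing variable: if $v_l\neq 0$ then the form is affine in $v_1$ with slope $2\lambda v_l\neq 0$, giving $(q-1)q^{l-2}$ copies of $F_q$; if $v_l=0$ then $v_1$ drops out entirely and you reduce to the central $(l-2)\times(l-2)$ block with $v_1$ free, giving $q$ copies of $\multisetmatrix(M')$. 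Your stated recursion $q^{l-1}F_q \cup q\,\multisetmatrix(M')$ is wrong even on cardinality grounds: it has $q^l+q^{l-1}$ elements instead of $q^l$.

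Once corrected, your union recursion is equivalent to the paper's, which obtains the \emph{sumset} recursion $\multisetmatrix(M_l)=\multisetmatrix(M_{l-2})+T_q$ directly by applying row and column operations (subtracting multiples of the first row/column) to put $M$ in the block form $\begin{pmatrix}0 & \mathbf{0} & \lambda \\ \mathbf{0} & \hat M & \mathbf{0} \\ \lambda & \mathbf{0} & *\end{pmatrix}$; the $2\times 2$ corner block then contributes exactly $T_q$. This is the same peeling done in matrix language, and it has the advantage of landing immediately in sumset form, so the closed formula $\tfrac{l}{2}T_q$ (resp.\ $\tfrac{l-1}{2}T_q+S_q(\lambda)$) follows without any multiplicity bookkeeping. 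For $\multisetmatrix_{\mathcal M}(M)$ you are overcomplicating things: with $v_l=1$ the form is affine in $v_1$ with slope $2\lambda\neq 0$, so you get $q^{l-2}$ copies of $F_q$ outright, which equals $F_q+A$ for any multiset $A$ of size $q^{l-2}$ --- no further case split on $v_1$ is needed.
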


\begin{proof}
Let us write $\hat{M}$ for the matrix obtained by removing the last row and column from $M$. We can apply row and column operations to $M$ to obtain a matrix of the form
\begin{align*}
\begin{pNiceMatrix}
0 & \Cdots & \Cdots & \Cdots & 0 & \lambda \\
\Vdots &  &  & \Iddots & \lambda & 0 \\
\Vdots &  & \Iddots & \Iddots & \alpha_2 & 0 \\
\Vdots & \Iddots & \Iddots & \Iddots & \Vdots & \Vdots \\
0 & \lambda & \alpha_2 & \Cdots & \alpha_{l-1} & 0 \\
\lambda & 0 & 0 & \Cdots & 0 & \alpha_l ' 
\end{pNiceMatrix} 
= \begin{pmatrix}
0 & \vline & \mathbf{0} & \vline & \lambda \\
\hline
\mathbf{0} & \vline & \hat{M} & \vline & \mathbf{0} \\
\hline
\lambda & \vline & \mathbf{0} & \vline & \alpha_l '
\end{pmatrix} 
=: M' ,
\end{align*}
for some $\alpha_l ' \in \mathbb{F}_q$. That is, there is an invertible matrix $P$ such that
\begin{align*}
P M P
= M' .
\end{align*}
Similarly as in the proof of Lemma \ref{lemma, multiset H equals multiset reduced H}, the matrix $P$ is a bijection from $\mathbb{F}_q^l$ to $\mathbb{F}_q^l$ (as well as a bijection from $\mathbb{F}_q^{l-1} \times \{ 1 \}$ to $\mathbb{F}_q^{l-1} \times \{ 1 \}$), and so
\begin{align*}
\multisetmatrix (M )
= \multisetmatrix (M') .
\end{align*}
Also, it is not difficult to see that
\begin{align*}
\multisetmatrix (M')
= \multisetmatrix ( \hat{M} )
	+ \multisetmatrix \Bigg( \begin{pmatrix} 0 & \lambda \\ \lambda & \alpha_l ' \end{pmatrix} \Bigg) .
\end{align*}

Note that
\begin{align*}
&\multisetmatrix \Bigg( \begin{pmatrix} 0 & \lambda \\ \lambda & \alpha_l ' \end{pmatrix} \Bigg) \\
= &\bigg[ \mathbf{v}^T \begin{pmatrix} 0 & \lambda \\ \lambda & \alpha_l ' \end{pmatrix} \mathbf{v} : \mathbf{v} \in \mathbb{F}_q^2 \bigg]
= [2 \lambda v_1 v_2 + \alpha_l ' {v_2}^2 : v_1 , v_2 \in \mathbb{F}_q ]
= [v_1 v_2  : v_1 , v_2 \in \mathbb{F}_q ] 
= T_q ,
\end{align*}
where the second equality follows from the substitution $v_1 \mapsto \frac{1}{2 \lambda } (v_1 - \alpha_1 ' v_2 )$ which is bijective. \\

 So, an inductive argument gives us
\begin{align*}
\multisetmatrix (M )
= \begin{cases}
\frac{l}{2} T_q &\text{ if $l$ is even,} \\
\frac{l-1}{2} T_q + \multisetmatrix \big( ( \lambda ) \big) = \frac{l-1}{2} T_q + S_q (\lambda ) &\text{ if $l$ is odd.}
\end{cases}
\end{align*}

Now consider the second claim of the Lemma. The case $l=1$ is trivial. For the other cases, similarly as above, we have
\begin{align*}
\multisetmatrix_{\mathcal{M}} (M )
= \multisetmatrix ( \hat{M} )
	+ \multisetmatrix_{\mathcal{M}} \Bigg( \begin{pmatrix} 0 & \lambda \\ \lambda & \alpha_l ' \end{pmatrix} \Bigg) .
\end{align*}
We have that
\begin{align*}
&\multisetmatrix_{\mathcal{M}} \Bigg( \begin{pmatrix} 0 & \lambda \\ \lambda & \alpha_l ' \end{pmatrix} \Bigg) \\
= &\bigg[ \mathbf{v}^T \begin{pmatrix} 0 & \lambda \\ \lambda & \alpha_l ' \end{pmatrix} \mathbf{v} : \mathbf{v} \in \mathbb{F}_q \times \{ 1 \} \bigg]
= [2 \lambda v_1 + \alpha_l ' : v_1 \in \mathbb{F}_q ]
= [v_1  : v_1 \in \mathbb{F}_q ] 
= F_q ,
\end{align*}
where the second equality follows from the substitution $v_1 \mapsto \frac{1}{2 \lambda } (v_1 - \alpha_1 ' )$ which is bijective. The remainder of the proof is similar to the proof of the first claim of the lemma.
\end{proof}

\begin{lemma} \label{lemma, multisets of Hankel matrices}
Let $H$ be an $n \times n$ Hankel matrix with \rhopi partition equal to\break $(p_1 ' , p_1 '' , p_2 , \ldots , p_t )$, and so $\reduced (H)$ is of the form
\begin{align*}
\begin{pNiceMatrix}
H_t & \mathbf{0} & \Cdots &  &  & \mathbf{0} \\
\mathbf{0} & H_{t-1} & \Ddots &  &  & \Vdots \\
\Vdots & \Ddots & \Ddots &  &  &  \\
 &  &  & H_2 & \mathbf{0} & \mathbf{0} \\
 &  &  & \mathbf{0} & H_1 '' & \mathbf{0} \\
\mathbf{0} & \Cdots &  & \mathbf{0} & \mathbf{0} & H_1 '
\end{pNiceMatrix} .
\end{align*}
Of the integers $p_2 , \ldots , p_t$, suppose there are exactly $s$ of them that are odd, and denote these by $p_{i_1} , \ldots , p_{i_s}$. Let $\lambda_{i_1} , \ldots , \lambda_{i_s}$ be the main skew diagonal entries of $H_{i_1} , \ldots , H_{i_s}$. Also, if $p_1' \geq 1$, then let $\lambda_1 \neq 0$ be the main skew-diagonal entry of $H_1'$. Then,
\begin{align*}
\multisetmatrix_{\mathcal{M}} (H)
= \frac{(p_2 + \ldots + p_t ) - s}{2} T_q
	+ S_q ( \lambda_{i_1}) + \ldots + S_q ( \lambda_{i_s})
	+ G ,
\end{align*}
where
\begin{align*}
G
= \begin{cases}
Z_q (q^{p_1 ''}) + F_q + \frac{p_1' -2}{2} T_q &\text{if $p_1 ' \geq 2$ is even,} \\
Z_q (q^{p_1 ''})  + F_q + \frac{p_1' -3}{2} T_q + S_q (\lambda_1 ) &\text{if $p_1 ' \geq 3$ is odd,} \\
Z_q (q^{p_1 ''})  + [ \lambda_1 ] &\text{if $p_1 ' = 1$,} \\
Z_q (q^{p_1 '' -1})  + [0] = Z_q (q^{p_1 '' -1}) &\text{if $p_1 ' = 0$.}
\end{cases}
\end{align*}
\end{lemma}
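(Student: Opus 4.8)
The plan is to reduce the computation of $\multisetmatrix_{\mathcal{M}}(H)$ to a block-by-block computation over $\reduced(H)$, using Lemma \ref{lemma, multiset H equals multiset reduced H} and the block-diagonal structure displayed in \eqref{statement, ES rho pi form of H}. First I would invoke Lemma \ref{lemma, multiset H equals multiset reduced H} to replace $\multisetmatrix_{\mathcal{M}}(H)$ by $\multisetmatrix_{\mathcal{M}}(\reduced(H))$, so that from now on we may work with the block-diagonal matrix whose blocks are $H_t, H_{t-1}, \ldots, H_2$ (non-strict lower skew-triangular Hankel blocks), $H_1''$ (a $p_1'' \times p_1''$ zero block), and $H_1'$ (a $p_1' \times p_1'$ non-strict lower skew-triangular Hankel block, present only when $p_1' \geq 1$).

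The key observation is an additivity property for $\multisetmatrix_{\mathcal{M}}$ across a block decomposition: if $M = \begin{pmatrix} M_1 & \mathbf{0} \\ \mathbf{0} & M_2 \end{pmatrix}$ with $M_2$ of size $k \times k$, then a vector $\mathbf{v} \in \mathbb{F}_q^{n-1} \times \{1\}$ splits as $\mathbf{v} = (\mathbf{v}_1, \mathbf{v}_2)$ with $\mathbf{v}_1 \in \mathbb{F}_q^{n-k}$ free and $\mathbf{v}_2 \in \mathbb{F}_q^{k-1} \times \{1\}$, and $\mathbf{v}^T M \mathbf{v} = \mathbf{v}_1^T M_1 \mathbf{v}_1 + \mathbf{v}_2^T M_2 \mathbf{v}_2$, which gives $\multisetmatrix_{\mathcal{M}}(M) = \multisetmatrix(M_1) + \multisetmatrix_{\mathcal{M}}(M_2)$. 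Since the last block of $\reduced(H)$ is $H_1'$ when $p_1' \geq 1$ and $H_1''$ when $p_1' = 0$, I would apply this identity, peeling off the last block to obtain a $\multisetmatrix_{\mathcal{M}}$ term and treating all earlier blocks via $\multisetmatrix$. For those earlier blocks, $\multisetmatrix$ of a block-diagonal matrix is just the sumset of the $\multisetmatrix$'s of the individual blocks (no constraint on any coordinate), so I can compute $\multisetmatrix(H_j)$ for each $j \geq 2$ using Lemma \ref{lemma, multiset of nonstrict lower skew triang matr}: an even-sized block $H_j$ contributes $\tfrac{p_j}{2} T_q$, and an odd-sized block $H_j$ contributes $\tfrac{p_j - 1}{2} T_q + S_q(\lambda_j)$. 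Summing over $j = 2, \ldots, t$ and collecting the $T_q$ coefficients gives exactly $\tfrac{(p_2 + \cdots + p_t) - s}{2} T_q + S_q(\lambda_{i_1}) + \cdots + S_q(\lambda_{i_s})$, with the $S_q$ terms coming precisely from the $s$ odd-sized blocks.

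The remaining work is to identify $G$, which is the contribution of $H_1''$ together with $H_1'$. Here I split into cases on $p_1'$. When $p_1' = 0$, the last block is the zero block $H_1''$ of size $p_1''$, so $\multisetmatrix_{\mathcal{M}}(H_1'') = [\, \mathbf{v}^T \mathbf{0} \mathbf{v} : \mathbf{v} \in \mathbb{F}_q^{p_1'' - 1} \times \{1\}\,] = Z_q(q^{p_1'' - 1})$ since every such $\mathbf{v}$ gives $0$ and there are $q^{p_1'' - 1}$ of them. When $p_1' \geq 1$, the $H_1''$ block is handled by $\multisetmatrix$, contributing $\multisetmatrix(\mathbf{0}_{p_1'' \times p_1''}) = Z_q(q^{p_1''})$ (the all-zero multiset of size $q^{p_1''}$, which is $q^{p_1''}$ copies of $0$), and then I add $\multisetmatrix_{\mathcal{M}}(H_1')$ computed from the second part of Lemma \ref{lemma, multiset of nonstrict lower skew triang matr}: this yields $F_q + \tfrac{p_1' - 2}{2} T_q$ when $p_1'$ is even $\geq 2$, $F_q + \tfrac{p_1' - 3}{2} T_q + S_q(\lambda_1)$ when $p_1'$ is odd $\geq 3$, and $[\lambda_1]$ when $p_1' = 1$. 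Taking the sumset with $Z_q(q^{p_1''})$ (noting $Z_q(n) + A$ simply creates $n$ copies of $A$, or here one may keep it as a formal sumset term since that is how $G$ is stated) gives the four stated forms of $G$. I do not expect any genuine obstacle here — the proof is essentially bookkeeping; the only point requiring a little care is getting the vector-splitting identity $\multisetmatrix_{\mathcal{M}}\!\left(\begin{smallmatrix} M_1 & \mathbf{0} \\ \mathbf{0} & M_2 \end{smallmatrix}\right) = \multisetmatrix(M_1) + \multisetmatrix_{\mathcal{M}}(M_2)$ stated cleanly and applying it with the last block chosen correctly according to whether $p_1' = 0$ or $p_1' \geq 1$, together with keeping track of multiplicities (e.g. the $q^{p_1''}$ versus $q^{p_1'' - 1}$ discrepancy) throughout.
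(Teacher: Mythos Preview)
Your proposal is correct and follows essentially the same approach as the paper's own proof: invoke Lemma \ref{lemma, multiset H equals multiset reduced H} to pass to $\reduced(H)$, use the block-diagonal additivity $\multisetmatrix_{\mathcal{M}}\bigl(\begin{smallmatrix} M_1 & \mathbf{0} \\ \mathbf{0} & M_2 \end{smallmatrix}\bigr) = \multisetmatrix(M_1) + \multisetmatrix_{\mathcal{M}}(M_2)$ to peel off the last block, and then apply Lemma \ref{lemma, multiset of nonstrict lower skew triang matr} to each block together with the elementary computations $\multisetmatrix(H_1'') = Z_q(q^{p_1''})$ and $\multisetmatrix_{\mathcal{M}}(H_1'') = Z_q(q^{p_1''-1})$. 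If anything, you are slightly more explicit than the paper in stating the additivity identity and in tracking the $q^{p_1''}$ versus $q^{p_1''-1}$ multiplicity distinction.
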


\begin{proof}
By Lemma \ref{lemma, multiset H equals multiset reduced H}, we have
\begin{align*}
\multisetmatrix_{\mathcal{M}} (H)
= \multisetmatrix_{\mathcal{M}} \big( \reduced (H) \big) .
\end{align*}
By the block diagonal nature of $\reduced (H)$, we have
\begin{align*}
\multisetmatrix_{\mathcal{M}} \big( \reduced (H) \big) 
= \begin{cases}
\multisetmatrix_{\mathcal{M}} (H_1 ') + \multisetmatrix (H_1 '')+ \sum_{i=2}^{t} \multisetmatrix (H_i ) &\text{if $p_1 ' \geq 1$,} \\
\multisetmatrix_{\mathcal{M}} (H_1 '') + \sum_{i=2}^{t} \multisetmatrix (H_i ) &\text{if $p_1 ' = 0$.} 
\end{cases}
\end{align*}
As $H_1 ' , H_2 , \ldots , H_t$ are non-strict lower skew-triangular Hankel matrices, we can apply Lemma \ref{lemma, multiset of nonstrict lower skew triang matr} to $\multisetmatrix (H_1 ' ) , \multisetmatrix (H_2 ) , \ldots , \multisetmatrix (H_t )$. The result follows from this and the fact $H_1 ''$ being a $p_1 '' \times p_1 ''$ zero matrix implies
\begin{align*}
\multisetmatrix (H_1 '') 
= Z_q (q^{p_1 '' })
\end{align*}
and
\begin{align*}
\multisetmatrix_{\mathcal{M}} (H_1 '') 
= Z_q (q^{p_1 '' -1}) .
\end{align*}
\end{proof}

\section{Proof of Theorem \ref{main theorem, variance sums of squares}} \label{section, main theorems proofs}

For integers $k,h$ satisfying $0 \leq h \leq k$, we will need to define 
\begin{align*}
\mathscr{L}_k^h
:= \{ (\alpha_0 , \alpha_1 , \ldots , \alpha_{k}) \in \mathbb{F}_q^{k+1} : \alpha_0 , \ldots , \alpha_{h-1} = 0 \} . \\
\end{align*}

\begin{proof}[Proof of Theorem \ref{main theorem, variance sums of squares}]
As we have established in Section \ref{section, introduction}, we have that
\begin{align} \label{statement, main theorem proof, variance first simplification}
\variance{2n,h}{S_{\gamma ;m}}
= \frac{1}{q^{2n}} \sum_{A \in \mathcal{M}_{2n}} \Big( \sum_{B \in I(A;h)} S_{\gamma ;m} (B) \Big)^2 
	- q^{2(m+h-n)} .
\end{align}
Thus, we will focus on $\sum_{A \in \mathcal{M}_{2n}} \Big( \sum_{B \in I(A;h)} S_{\gamma ;m} (B) \Big)^2$. We have
\begin{align*}
\sum_{A \in \mathcal{M}_{2n}} \Big( \sum_{B \in I(A;h)} S_{\gamma ;m} (B) \Big)^2 
= \sum_{A \in \mathcal{M}_{2n} }
	\bigg( \sum_{\substack{B \in \mathcal{M}_{2n} \\ \degree (B-A) < h }} 
		\sum_{\substack{ E \in \mathcal{M}_n \\ F \in \mathcal{M}_m \\ E^2 + \gamma F^2 = B }} 1 \bigg)^2 
= \sum_{A \in \mathcal{A}_{\leq 2n} }
	\bigg( \sum_{\substack{B \in \mathcal{A}_{\leq 2n} \\ \degree (B-A) < h }} 
		\sum_{\substack{ E \in \mathcal{M}_n \\ F \in \mathcal{M}_m \\ E^2 + \gamma F^2 = B }} 1 \bigg)^2 .
\end{align*}
For the last equality, we can replace $A \in \mathcal{M}_{2n}$ and $B \in \mathcal{M}_{2n}$ with $A \in \mathcal{A}_{\leq 2n}$ and $B \in \mathcal{A}_{\leq 2n}$ because the condition $E^2 + \gamma F^2 = B$ forces $B$ and $A$ to be in $\mathcal{M}_{2n}$. For the polynomials $A,B,E,F$ we denote their $i$-th coefficient by $a_i , b_i , e_i , f_i$, respectively. Generally, for $C \in \mathcal{A}$ let us denote its $i$-th coefficient by $\{ C \}_i$; this is more convenient when $C$ is expressed as a product. Also, in what follows, $\mathbf{a}, \mathbf{b}, \mathbf{e}, \mathbf{f}$ are the vectors whose $i$-th term is equal to the $i$-th coefficient of $A,B,E,F$, respectively. We have
\begin{align*}
&\sum_{A \in \mathcal{M}_{2n}} \Big( \sum_{B \in I(A;h)} S_{\gamma ;m} (B) \Big)^2 \\
= &\sum_{A \in \mathcal{A}_{\leq 2n} }
	\bigg( \sum_{\substack{B \in \mathcal{A}_{\leq 2n} \\ \degree (B-A) < h }} 
		\sum_{\substack{ E \in \mathcal{M}_n \\ F \in \mathcal{M}_m }}
		\prod_{i=0}^{2n} \mathbbm{1} ( \{ E^2 \}_i + \gamma \{ F^2 \}_i = \{ B \}_i ) \bigg)^2 \\
= &\frac{1}{q^{4n+2}} \sum_{A \in \mathcal{A}_{\leq 2n} }
	\bigg( \sum_{\substack{B \in \mathcal{A}_{\leq 2n} \\ \degree (B-A) < h }} 
		\sum_{\substack{ E \in \mathcal{M}_n \\ F \in \mathcal{M}_m }}
		\prod_{i=0}^{2n} 
		\sum_{\alpha_i \in \mathbb{F}_q } 
		\psi \Big( \alpha_i \big( \{ E^2 \}_i + \gamma \{ F^2 \}_i - \{ B \}_i \big) \Big)
		\bigg)^2 .
\end{align*}
For the last equality, we have used (\ref{statement, intro, exp identity sum}) with $\alpha = \alpha_i$ and $b = \{ E^2 \}_i + \gamma \{ F^2 \}_i - \{ B \}_i$. Writing $\mathbfe{\alpha} = (\alpha_0 , \alpha_1 , \ldots , \alpha_{2n})$, we have
\begin{align*}
&\sum_{A \in \mathcal{M}_{2n}} \Big( \sum_{B \in I(A;h)} S_{\gamma ;m} (B) \Big)^2 \\
= &\frac{1}{q^{4n-h+2}} \sum_{a_h , \ldots , a_{2n} \in \mathbb{F}_q }
	\Bigg( \sum_{b_0 , \ldots , b_{h-1} \in \mathbb{F}_q} 
		\sum_{\substack{ E \in \mathcal{M}_n \\ F \in \mathcal{M}_m }}
		\sum_{\mathbfe{\alpha} \in \mathbb{F}_q^{2n+1} } 
		\prod_{i=0}^{h-1} \bigg(
		\psi \Big( \alpha_i \big( \{ E^2 \}_i + \gamma \{ F^2 \}_i - b_i \big) \Big) \bigg) \\
		&\hspace{20em} \times \prod_{i=h}^{2n} \bigg(
		\psi \Big( \alpha_i \big( \{ E^2 \}_i + \gamma \{ F^2 \}_i - a_i \big) \Big) \bigg)
		\Bigg)^2 \\
= &\frac{1}{q^{4n-h+2}} \sum_{a_h , \ldots , a_{2n} \in \mathbb{F}_q }
	\Bigg( \sum_{b_0 , \ldots , b_{h-1} \in \mathbb{F}_q} 
		\sum_{\substack{ E \in \mathcal{M}_n \\ F \in \mathcal{M}_m }}
		\sum_{\mathbfe{\alpha} \in \mathbb{F}_q^{2n+1} } 
		\prod_{i=0}^{h-1} \bigg(
		\psi \Big( \alpha_i \big( \{ E^2 \}_i + \gamma \{ F^2 \}_i - b_i \big) \Big) \bigg) \\
		&\hspace{20em} \times \prod_{i=h}^{2n} \bigg(
		\psi \Big( \alpha_i \big( \{ E^2 \}_i + \gamma \{ F^2 \}_i - a_i \big) \Big) \bigg)
		\Bigg) \\
	&\hspace{8em} \times \Bigg( \sum_{b_0' , \ldots , b_{h-1}' \in \mathbb{F}_q} 
		\sum_{\substack{ E' \in \mathcal{M}_n \\ F' \in \mathcal{M}_m }}
		\sum_{\mathbfe{\beta} \in \mathbb{F}_q^{2n+1} } 
		\prod_{i=0}^{h-1} \bigg(
		\psi \Big( \beta_i \big( \{ E'^2 \}_i + \gamma \{ F'^2 \}_i - b_i' \big) \Big) \bigg) \\
		&\hspace{20em} \times \prod_{i=h}^{2n} \bigg(
		\psi \Big( \beta_i \big( \{ E'^2 \}_i + \gamma \{ F'^2 \}_i - a_i \big) \Big) \bigg)
		\Bigg) .
\end{align*}
For $i = 0 , \ldots , h-1$, let us consider the sum over $b_i$. We have
\begin{align*}
\frac{1}{q} \sum_{b_i \in \mathbb{F}_q} \psi (- \alpha_i b_i) .
\end{align*}
By (\ref{statement, intro, exp identity sum}), this is non-zero (specifically, it takes the value $1$) if and only if $\alpha_i = 0$. We can apply the same reasoning to the sums over $b_i'$ and we deduce that $\beta_i$ = 0 for $i = 0 , \ldots , h-1$. Let us also consider the sum over $a_i$, for $i = h , \ldots , 2n$. We have
\begin{align*}
\frac{1}{q} \sum_{a_i \in \mathbb{F}_q} \psi \big( -( \alpha_i + \beta_i ) a_i \big) .
\end{align*}
Again, we apply (\ref{statement, intro, exp identity sum}) to see that this takes the value $1$ if $\alpha_i + \beta_i = 0$, and takes the value $0$ otherwise. Thus, we can see that $\mathbfe{\alpha} = (0 , \ldots , \alpha_h , \ldots , \alpha_{2n}) \in \mathscr{L}_{2n}^h$ and $\mathbfe{\beta} = (0 , \ldots , - \alpha_h , \ldots , - \alpha_{2n})$. Hence,
\begin{align}
\begin{split}\label{statement, main theorem with m cases, first Hankel expression}
&\sum_{A \in \mathcal{M}_{2n}} \Big( \sum_{B \in I(A;h)} S_{\gamma ;m} (B) \Big)^2 \\
= &\frac{1}{q^{2n-2h+1}} \sum_{ \mathbfe{\alpha} \in \mathscr{L}_{2n}^h }
	\Bigg( \sum_{\substack{ E \in \mathcal{M}_n \\ F \in \mathcal{M}_m }}
		\prod_{i=0}^{2n}
		\psi \Big( \alpha_i \big( \{ E^2 \}_i + \gamma \{ F^2 \}_i \big) \Big) \Bigg)
	\Bigg( \sum_{\substack{ E \in \mathcal{M}_n \\ F \in \mathcal{M}_m }}
		\prod_{i=0}^{2n}
		\psi \Big( - \alpha_i \big( \{ E^2 \}_i + \gamma \{ F^2 \}_i \big) \Big) \Bigg) \\
= &\frac{1}{q^{2n-2h+1}} \sum_{ \mathbfe{\alpha} \in \mathscr{L}_{2n}^h }
	\Bigg( \sum_{\substack{ \mathbf{e} \in \mathbb{F}_q^{n} \times \{ 1 \} \\ \mathbf{f} \in \mathbb{F}_q^{m} \times \{ 1 \} }}
		\psi \bigg( \mathbf{e}^T H_{n+1 , n+1} (\mathbfe{\alpha}) \mathbf{e}
			+ \mathbf{f}^T \gamma H_{m+1 , m+1} (\mathbfe{\alpha}) \mathbf{f}
		\bigg) \Bigg) \\
	& \hspace{6em} \times \Bigg( \sum_{\substack{ \mathbf{e} \in \mathbb{F}_q^{n} \times \{ 1 \} \\ \mathbf{f} \in \mathbb{F}_q^{m} \times \{ 1 \} }}
		\psi \bigg( \mathbf{e}^T H_{n+1 , n+1} (- \mathbfe{\alpha}) \mathbf{e}
			+ \mathbf{f}^T \gamma H_{m+1 , m+1} (- \mathbfe{\alpha}) \mathbf{f}
		\bigg) \Bigg) .
\end{split}
\end{align}
Now, Lemma \ref{lemma, main theorem alpha contributions} below tells us that a non-zero contribution to (\ref{statement, main theorem with m cases, first Hankel expression}) occurs only when $\pi_s \Big( H_{n+1 , n+1} (\mathbfe{\alpha}) \Big) \in \{ 0 , 1 \}$ and $\pi_s \Big( \gamma H_{m+1 , m+1} (\mathbfe{\alpha}) \Big) \in \{ 0 , 1 \}$, and the value of the contribution depends on $\rho_s \Big( H_{n+1 , n+1} (\mathbfe{\alpha}) \Big)$ and $\rho_s \Big( \gamma H_{m+1 , m+1} (\mathbfe{\alpha}) \Big)$. Note that multiplying $H_{m+1 , m+1} (\mathbfe{\alpha})$ by $\gamma \in \mathbb{F}_q^*$ does not change its strict \rhopi-characteristic. From this and the second claim of Lemma \ref{lemma, main theorem alpha contributions}, we can, without loss of generality, take $\gamma =1$. Thus, we must determine the value of 
\begin{align*}
N_{n,m,h} (\rho_2 , \rho_1)
:= \bigg\lvert \bigg\{ \mathbfe{\alpha} \in \mathscr{L}_{2n}^h : \substack{ 
	\rho_s \Big( H_{m+1 , m+1} (\mathbfe{\alpha}) \Big) = \rho_2 , 
		\hspace{1em} \pi_s \Big( H_{m+1 , m+1} (\mathbfe{\alpha}) \Big) \in \{ 0 , 1 \} \\
	\rho_s \Big( H_{n+1 , n+1} (\mathbfe{\alpha}) \Big) = \rho_1 , 
		\hspace{1em} \pi_s \Big( H_{n+1 , n+1} (\mathbfe{\alpha}) \Big) \in \{ 0 , 1 \} } \bigg\} \bigg\rvert ,
\end{align*}
and what the permissible ranges for $\rho_2 , \rho_1$ are. This is determined in Lemma \ref{lemma, main theorem N_m,n,h (rho_1 , rho_2) evaluation}. The ranges for $\rho_2 , \rho_1$ are dependent on $n,m,h$, and so we must consider several cases and subcases separately. Before we do this, let us use Lemmas \ref{lemma, main theorem alpha contributions} and \ref{lemma, main theorem N_m,n,h (rho_1 , rho_2) evaluation} to rewrite (\ref{statement, main theorem with m cases, first Hankel expression}) as
\begin{align*}
\sum_{A \in \mathcal{M}_{2n}} \Big( \sum_{B \in I(A;h)} S_{\gamma ;m} (B) \Big)^2 
= q^{2m+2h-1}
	\sum_{\rho_2 , \rho_1}
	N_{n,m,h} (\rho_2 , \rho_1)
	q^{-\rho_1 - \rho_2} ,
\end{align*}
where the sum on the right side is over the permissible values of $\rho_2 , \rho_1$. \\

Now, applying Lemma \ref{lemma, main theorem N_m,n,h (rho_1 , rho_2) evaluation}, we have the following cases. \\

\textbf{Case 1:} $m+1 \leq n \leq 2m$. \\

\underline{Subcase 1.1:} $m \leq h \leq 2n$. \\

\begin{align*}
\sum_{A \in \mathcal{M}_{2n}} \Big( \sum_{B \in I(A;h)} S_{\gamma ;m} (B) \Big)^2 
= q^{2m+2h-1}
	\sum_{\rho_2 , \rho_1 = 0} q 
= q^{2m+2h} . 
\end{align*}

\underline{Subcase 1.2:} $2m-n \leq h \leq m-1$. \\

\begin{align*}
&\sum_{A \in \mathcal{M}_{2n}} \Big( \sum_{B \in I(A;h)} S_{\gamma ;m} (B) \Big)^2 \\
= &q^{2m+2h-1} \Bigg[
	\bigg( \sum_{\rho_2 , \rho_1 = 0} q \bigg)
	+ \bigg( \sum_{h+1 \leq \rho_2 = \rho_1 \leq m} (q-1) q^{-h} \bigg)
	+ \bigg( \sum_{\substack{h+1 \leq \rho_2 \leq m \\ 2m+1-\rho_2 \leq \rho_1 \leq n}} (q-1)^2 q^{\rho_1 + \rho_2 -2m-h-1} \bigg) \Bigg] \\
= &q^{2m+2h-1} \bigg[
	q
	+ (m-h) (q-1) q^{-h}
	+ \Big( q^{n-2m-h} (q^{m+1} - q^{h+1} ) - (m-h) (q-1) q^{-h} \Big) \bigg) \\
= &q^{2m+2h} + q^{n+m+h} + q^{n+2h} . \\
\end{align*}

\underline{Subcase 1.3:} $0 \leq h \leq 2m-n-1$. (This does not apply when $n=2m$). \\

\begin{align*}
&\sum_{A \in \mathcal{M}_{2n}} \Big( \sum_{B \in I(A;h)} S_{\gamma ;m} (B) \Big)^2 \\
= &q^{2m+2h-1} \Bigg[
	\bigg( \sum_{\rho_2 , \rho_1 = 0} q \bigg)
	+ \bigg( \sum_{h+1 \leq \rho_2 = \rho_1 \leq m} (q-1) q^{-h} \bigg)
	+ \bigg( \sum_{\substack{2m+1-n \leq \rho_2 \leq m \\ 2m+1-\rho_2 \leq \rho_1 \leq n}} (q-1)^2 q^{\rho_1 + \rho_2 -2m-h-1} \bigg) \Bigg] \\
= &q^{2m+2h-1} \bigg[
	q
	+ (m-h) (q-1) q^{-h}
	+ \Big( q^{n-m-h+1} - q^{-h+1} - (n-m) (q-1) q^{-h} \Big) \bigg) \\
= &q^{2m+2h} + (2m-n-h) (q-1) q^{2m+h-1} + q^{n+m+h} + q^{2m+h} . \\
\end{align*}

\textbf{Case 2:} $n \geq 2m+1$. \\

\underline{Subcase 2.1:} $n \leq h \leq 2n$. \\

\begin{align*}
\sum_{A \in \mathcal{M}_{2n}} \Big( \sum_{B \in I(A;h)} S_{\gamma ;m} (B) \Big)^2 
= q^{2m+2h-1}
	\sum_{\rho_2 , \rho_1 = 0} q 
= q^{2m+2h} . 
\end{align*}

\underline{Subcase 2.2:} $2m \leq h \leq n-1$. \\

\begin{align*}
\sum_{A \in \mathcal{M}_{2n}} \Big( \sum_{B \in I(A;h)} S_{\gamma ;m} (B) \Big)^2 
= q^{2m+2h-1}
	\bigg( \sum_{\rho_2 , \rho_1 = 0} q 
		+\sum_{\substack{\rho_2 = 0 \\ h+1 \leq \rho_1 \leq n}} (q-1) q^{\rho_1 - h} \bigg)
= q^{n+2m+h} . 
\end{align*}

\underline{Subcase 2.3:} $m \leq h \leq 2m-1$. \\

\begin{align*}
\sum_{A \in \mathcal{M}_{2n}} \Big( \sum_{B \in I(A;h)} S_{\gamma ;m} (B) \Big)^2 
= q^{2m+2h-1}
	\bigg( \sum_{\rho_2 , \rho_1 = 0} q 
		+\sum_{\substack{\rho_2 = 0 \\ 2m+1 \leq \rho_1 \leq n}} (q-1) q^{\rho_1 - 2m} \bigg)
= q^{n+2h} . 
\end{align*}

\underline{Subcase 2.4:} $0 \leq h \leq m-1$. \\

\begin{align*}
&\sum_{A \in \mathcal{M}_{2n}} \Big( \sum_{B \in I(A;h)} S_{\gamma ;m} (B) \Big)^2 \\
= &q^{2m+2h-1}
	\bigg( \sum_{\rho_2 , \rho_1 = 0} q 
		+\sum_{\substack{\rho_2 = 0 \\ 2m+1 \leq \rho_1 \leq n}} (q-1) q^{\rho_1 - 2m} \\
		&\hspace{6em} +\sum_{h+1 \leq \rho_2 = \rho_1 \leq m} (q-1) q^{- h} 
		+\sum_{\substack{h+1 \leq \rho_2 \leq m \\ 2m+1 - \rho_2 \leq \rho_1 \leq n}} (q-1)^2 q^{\rho_1 + \rho_2 -2m- h-1} \bigg) \\
= &q^{2m+2h-1}
	\bigg( (q)
		+ (q^{n-2m+1} - q)
		+(m-h) (q-1) q^{-h}
		+ \Big( q^{n-m-h+1} - q^{n-2m+1} - (m-h) (q-1) q^{-h} \Big)\bigg) \\
= &q^{n+m+h} .\\
\end{align*}

\textbf{Special Case:} When $m=0$ only Case 2, Subcases 2.1 and 2.2 apply. \\

The proof follows by substituting each of the subcases above into (\ref{statement, main theorem proof, variance first simplification}) .
\end{proof}

\vspace{1em}

\begin{lemma} \label{lemma, main theorem alpha contributions}
Let $\mathbfe{\alpha} \in \mathbb{F}_q^{2n+1}$, and let $(p_1 ' , p_1 '' , p_2 , \ldots , p_t )$ be the \rhopi-partition of $H_{n+1 , n+1} (\mathbfe{\alpha})$. If $p_1 ' \geq 2$, then
\begin{align*}
\sum_{ \mathbf{e} \in \mathbb{F}_q^{n} \times \{ 1 \} }
	\psi \bigg( \mathbf{e}^T H_{n+1 , n+1} (\mathbfe{\alpha}) \mathbf{e} \bigg) 
= 0 ,
\end{align*}
from which we immediately deduce
\begin{align*}
\Bigg( \sum_{ \mathbf{e} \in \mathbb{F}_q^{n} \times \{ 1 \} }
	\psi \bigg( \mathbf{e}^T H_{n+1 , n+1} (\mathbfe{\alpha}) \mathbf{e} \bigg) \Bigg)
\Bigg( \sum_{ \mathbf{e} \in \mathbb{F}_q^{n} \times \{ 1 \} }
	\psi \bigg( \mathbf{e}^T H_{n+1 , n+1} (- \mathbfe{\alpha}) \mathbf{e} \bigg) \Bigg)
= 0 .
\end{align*}

If $0 \leq p_1 ' \leq 1$, then 
\begin{align*}
\Bigg( \sum_{ \mathbf{e} \in \mathbb{F}_q^{n} \times \{ 1 \} }
	\psi \bigg( \mathbf{e}^T H_{n+1 , n+1} (\mathbfe{\alpha}) \mathbf{e} \bigg) \Bigg)
\Bigg( \sum_{ \mathbf{e} \in \mathbb{F}_q^{n} \times \{ 1 \} }
	\psi \bigg( \mathbf{e}^T H_{n+1 , n+1} (- \mathbfe{\alpha}) \mathbf{e} \bigg) \Bigg)
= q^{2n - \rho_1} ,
\end{align*}
where $(\rho_1 , \pi_1)$ is the strict \rhopi-characteristic of $H_{n+1 , n+1} (\mathbfe{\alpha})$. (Recall that $\rho_1 = p_2 + \ldots + p_t$ and $\pi_1 = p_1 '$).
\end{lemma}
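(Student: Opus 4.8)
The plan is to turn each inner sum into a character sum over one of the multisets of Section~\ref{section, Hankel matrices}, and then read off the value from Lemma~\ref{lemma, multisets of Hankel matrices}. Write $H := H_{n+1 , n+1} (\mathbfe{\alpha})$, an $(n+1) \times (n+1)$ Hankel matrix, and for a finite multiset $X$ of elements of $\mathbb{F}_q$ put $\widehat{X} := \sum_{a \in \mathbb{F}_q} m_X (a) \psi (a)$. Grouping the vectors $\mathbf{e} \in \mathbb{F}_q^n \times \{ 1 \}$ by the value of $\mathbf{e}^T H \mathbf{e}$ gives
\begin{align*}
\sum_{\mathbf{e} \in \mathbb{F}_q^n \times \{ 1 \}} \psi \big( \mathbf{e}^T H \mathbf{e} \big)
= \widehat{ \multisetmatrix_{\mathcal{M}} (H) } ,
\end{align*}
and since $m_{A+B} (c) = \sum_{a+b=c} m_A (a) m_B (b)$, the assignment $X \mapsto \widehat{X}$ is multiplicative for sumsets, i.e. $\widehat{A+B} = \widehat{A} \, \widehat{B}$ and $\widehat{kX} = \widehat{X}^k$. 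First I would record, using (\ref{statement, intro, exp identity sum}), the elementary values $\widehat{T_q} = q$, $\widehat{F_q} = 0$, $\widehat{Z_q (k)} = k$ and $\widehat{[\lambda]} = \psi (\lambda)$, together with the Gauss-sum identity, valid for $\lambda \in \mathbb{F}_q^*$ when $q$ is odd,
\begin{align*}
\widehat{S_q (\lambda)} \, \widehat{S_q (-\lambda)}
= \sum_{a,b \in \mathbb{F}_q} \psi \big( \lambda (a-b)(a+b) \big)
= \sum_{u,v \in \mathbb{F}_q} \psi ( \lambda u v )
= q ,
\end{align*}
where the middle step uses the bijective change of variables $(a,b) \mapsto (a-b , a+b)$.

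For the case $p_1 ' \geq 2$ I would simply invoke Lemma~\ref{lemma, multisets of Hankel matrices} (with $n+1$ in the role of its dimension parameter): it exhibits $\multisetmatrix_{\mathcal{M}} (H)$ as a sumset one of whose summands is $G$, and for both parities with $p_1 ' \geq 2$ the displayed formula for $G$ contains $F_q$ as a sumset summand. Hence $\widehat{G} = 0$ by multiplicativity, so $\widehat{\multisetmatrix_{\mathcal{M}} (H)} = 0$, which is exactly the first displayed identity; the vanishing of the product then needs nothing further.

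For the case $0 \leq p_1 ' \leq 1$ I would first note that $\multisetmatrix_{\mathcal{M}} (-H) = \big[ -( \mathbf{v}^T H \mathbf{v} ) : \mathbf{v} \in \mathbb{F}_q^n \times \{ 1 \} \big]$, so that $\widehat{\multisetmatrix_{\mathcal{M}} (-H)} = \overline{ \widehat{\multisetmatrix_{\mathcal{M}} (H)} }$ (the multiplicities being real and $\overline{\psi (a)} = \psi (-a)$), and therefore the product in the statement equals $\big\lvert \widehat{\multisetmatrix_{\mathcal{M}} (H)} \big\rvert^2$. Then I would feed Lemma~\ref{lemma, multisets of Hankel matrices} into the multiplicativity together with the values above: the $\frac{(p_2 + \ldots + p_t) - s}{2} T_q$ part contributes $q^{(p_2 + \ldots + p_t) - s}$ to $\lvert \widehat{\,\cdot\,} \rvert^2$; each block term $S_q (\lambda_{i_j})$ contributes $\widehat{S_q (\lambda_{i_j})} \widehat{S_q (-\lambda_{i_j})} = q$, for a total factor $q^s$; and $G$ contributes $\lvert \widehat{G} \rvert^2 = q^{2 p_1 ''}$ when $p_1 ' = 1$ (since $\widehat{G} = q^{p_1 ''} \psi (\lambda_1)$) and $\lvert \widehat{G} \rvert^2 = q^{2 p_1 '' - 2}$ when $p_1 ' = 0$ (since $\widehat{G} = q^{p_1 '' - 1}$, recalling $p_1 '' \geq 1$ in that case). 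Multiplying out, $\big\lvert \widehat{\multisetmatrix_{\mathcal{M}} (H)} \big\rvert^2$ equals $q^{(p_2 + \ldots + p_t) + 2 p_1 ''}$ if $p_1 ' = 1$ and $q^{(p_2 + \ldots + p_t) + 2 p_1 '' - 2}$ if $p_1 ' = 0$. Finally, using the size count $n+1 = p_1 ' + p_1 '' + (p_2 + \ldots + p_t)$ to eliminate $p_1 ''$ turns both expressions into $q^{2n - \rho_1}$ with $\rho_1 = p_2 + \ldots + p_t = \rho_s (H)$, which is the claim.

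The bulk of the work is done by Lemma~\ref{lemma, multisets of Hankel matrices}, so there is no serious obstacle here; the only points requiring care are the final arithmetic matching $\rho_1$, $p_1 ''$ and $p_1 '$ against $n$ in the two subcases (including the convention that $p_1 '' \geq 1$ whenever $p_1 ' = 0$), the identification of the $-\mathbfe{\alpha}$ sum with a complex conjugate, and flagging the standing hypothesis that $q$ is odd, which is precisely what makes the change of variables in the Gauss-sum step a bijection.
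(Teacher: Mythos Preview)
Your proof is correct and follows essentially the same approach as the paper: both reduce the sum to a character sum over $\multisetmatrix_{\mathcal{M}}(H)$, invoke Lemma~\ref{lemma, multisets of Hankel matrices}, and exploit multiplicativity under sumsets together with $\widehat{F_q}=0$ and $\widehat{T_q}=q$. The only cosmetic differences are that the paper writes out the $\mathbfe{\alpha}$ and $-\mathbfe{\alpha}$ factors separately and pairs the $S_q$-terms via the multiset identity $S_q(\mu)+S_q(-\mu)=T_q$, whereas you pass to the complex conjugate and compute $\widehat{S_q(\lambda)}\,\widehat{S_q(-\lambda)}=q$ directly; these are the same calculation in slightly different packaging.
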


\begin{proof}
We begin with the first claim, where $p_1 ' \geq 2$. We have
\begin{align*}
\sum_{ \mathbf{e} \in \mathbb{F}_q^{n} \times \{ 1 \} }
	\psi \bigg( \mathbf{e}^T H_{n+1 , n+1} (\mathbfe{\alpha}) \mathbf{e} \bigg) 
= \sum_{ a \in \multisetmatrix_{\mathcal{M}} \big( H_{n+1 , n+1} (\mathbfe{\alpha}) \big) }
	\psi ( a ) .
\end{align*}
Now, Lemma \ref{lemma, multisets of Hankel matrices} tells us that 
\begin{align*}
\multisetmatrix_{\mathcal{M}} \big( H_{n+1 , n+1} (\mathbfe{\alpha}) \big)
= J + F_q ,
\end{align*}
where $J$ is a multiset that the Lemma gives explicitly (but we do not require it explicitly for this proof), and we recall that $F_q$ is defined to be the multiset where each element of $\mathbb{F}_q$ appears exactly once. Thus, we have
\begin{align*}
\sum_{ \mathbf{e} \in \mathbb{F}_q^{n} \times \{ 1 \} }
	\psi \bigg( \mathbf{e}^T H_{n+1 , n+1} (\mathbfe{\alpha}) \mathbf{e} \bigg) 
= \sum_{ a \in J+F_q } \psi ( a ) 
= \bigg( \sum_{ a \in J } \psi ( a ) \bigg) \bigg( \sum_{ b \in F_q } \psi ( b )  \bigg)
= 0 ,
\end{align*}
where the last equality uses the fact that $\sum_{ b \in F_q } \psi ( b ) = \sum_{ b \in \mathbb{F}_q } \psi ( b ) = 0$. \\

Now we consider the second claim, where $0 \leq p_1 ' \leq 1$. As in Lemma \ref{lemma, multisets of Hankel matrices}, of the integers $p_2 , \ldots , p_t$, suppose there are exactly $s$ of them that are odd, and denote these by $p_{i_1} , \ldots , p_{i_s}$. Let $\lambda_{i_1} , \ldots , \lambda_{i_s}$ be the main skew-diagonal entries of the corresponding blocks in $\reduced \big( H_{n+1 , n+1} (\mathbfe{\alpha}) \big)$. Also, if $p_1 ' = 1$, then let $\lambda$ be the main skew-diagonal entry of $H_1 '$ (which is the only entry of $H_1 '$ as it is a $1 \times 1$ matrix). Otherwise, if $p_1 ' = 0$, then let $\lambda = 0$. Lemma \ref{lemma, multisets of Hankel matrices} gives us
\begin{align*}
&\sum_{ \mathbf{e} \in \mathbb{F}_q^{n} \times \{ 1 \} }
	\psi \bigg( \mathbf{e}^T H_{n+1 , n+1} (\mathbfe{\alpha}) \mathbf{e} \bigg) \\
= &\sum_{ a \in \multisetmatrix_{\mathcal{M}} \big( H_{n+1 , n+1} (\mathbfe{\alpha}) \big) }
	\psi ( a ) \\
= &\bigg( \sum_{a \in \frac{(p_2 + \ldots + p_t ) - s}{2} T_q } \psi (a) \bigg)
	\bigg( \sum_{b \in S_q ( \lambda_{i_1}) + \ldots + S_q ( \lambda_{i_s}) } \psi (b) \bigg)
	\bigg( \sum_{c \in Z_q (q^{p_1 ' + p_1 '' -1})} \psi (c) \bigg)
	\psi (\lambda ) .
\end{align*}
Similarly,
\begin{align*}
&\sum_{ \mathbf{e} \in \mathbb{F}_q^{n} \times \{ 1 \} }
	\psi \bigg( \mathbf{e}^T H_{n+1 , n+1} (- \mathbfe{\alpha}) \mathbf{e} \bigg) \\
= &\bigg( \sum_{a \in \frac{(p_2 + \ldots + p_t ) - s}{2} T_q } \psi (a) \bigg)
	\bigg( \sum_{b \in S_q ( - \lambda_{i_1}) + \ldots + S_q ( - \lambda_{i_s}) } \psi (b) \bigg)
	\bigg( \sum_{c \in Z_q (q^{p_1 ' + p_1 '' -1})} \psi (c) \bigg)
	\psi (- \lambda ) .
\end{align*}
Now, for any $\mu \in \mathbb{F}_q^*$, it is not difficult to see that
\begin{align*}
S_q (\mu ) + S_q (- \mu )
= T_q .
\end{align*}
Thus, 
\begin{align*}
&\bigg( \sum_{b \in S_q ( \lambda_{i_1}) + \ldots + S_q ( \lambda_{i_s}) } \psi (b) \bigg)
	\bigg( \sum_{b \in S_q ( - \lambda_{i_1}) + \ldots + S_q ( - \lambda_{i_s}) } \psi (b) \bigg) \\
= &\bigg( \sum_{b_1 \in S_q ( \lambda_{i_1}) + S_q ( - \lambda_{i_1}) } \psi (b_1 ) \bigg)
	\ldots
	\bigg( \sum_{b_s \in S_q ( \lambda_{i_s}) + S_q ( - \lambda_{i_s}) } \psi (b_s ) \bigg) \\
= &\bigg( \sum_{b \in T_q } \psi (b) \bigg)^s 
= \bigg( \sum_{b \in sT_q } \psi (b) \bigg) .
\end{align*}
Also,
\begin{align*}
\sum_{c \in Z_q (q^{p_1 ' + p_1 '' -1})} \psi (c) 
= q^{p_1 ' + p_1 '' -1}
= q^{n - \rho_1} ,
\end{align*}
where the last line uses the fact that $p_1' + p_1'' + p_2 + \ldots + p_t = n+1$ and $p_2 + \ldots + p_t = \rho_1$. So, we can deduce that
\begin{align*}
&\Bigg( \sum_{ \mathbf{e} \in \mathbb{F}_q^{n} \times \{ 1 \} }
	\psi \bigg( \mathbf{e}^T H_{n+1 , n+1} (\mathbfe{\alpha}) \mathbf{e} \bigg) \Bigg)
\Bigg( \sum_{ \mathbf{e} \in \mathbb{F}_q^{n} \times \{ 1 \} }
	\psi \bigg( \mathbf{e}^T H_{n+1 , n+1} (- \mathbfe{\alpha}) \mathbf{e} \bigg) \Bigg) \\
= &q^{2n - 2 \rho_1}
	\bigg( \sum_{a \in \frac{(p_2 + \ldots + p_t ) - s}{2} T_q } \psi (a) \bigg)^2
	\bigg( \sum_{b \in sT_q } \psi (b) \bigg) \\
= &q^{2n - 2 \rho_1}
	\sum_{a \in \rho_1 T_q } \psi (a) 
= q^{2n - \rho_1} ,
\end{align*}
where the second-to-last equality uses the fact that $\rho_1 = p_2 + \ldots + p_t$, and the last equality uses the fact that $\sum_{a \in T_q } \psi (a) = q$.
\end{proof}

\begin{lemma} \label{lemma, main theorem N_m,n,h (rho_1 , rho_2) evaluation}
Define 
\begin{align} \label{statement, lemma, main theorem N_m,n,h (rho_1 , rho_2) evaluation, N_n,m,h def}
N_{n,m,h} (\rho_2 , \rho_1)
:= \bigg\lvert \bigg\{ \mathbfe{\alpha} \in \mathscr{L}_{2n}^h : \substack{ 
	\rho_s \Big( H_{m+1 , m+1} (\mathbfe{\alpha}) \Big) = \rho_2 , 
		\hspace{1em} \pi_s \Big( H_{m+1 , m+1} (\mathbfe{\alpha}) \Big) \in \{ 0 , 1 \} \\
	\rho_s \Big( H_{n+1 , n+1} (\mathbfe{\alpha}) \Big) = \rho_1 , 
		\hspace{1em} \pi_s \Big( H_{n+1 , n+1} (\mathbfe{\alpha}) \Big) \in \{ 0 , 1 \} } \bigg\} \bigg\rvert .
\end{align}
In what follows, we have several cases and subcases that cover all possible values for $n,m,h$. In each subcase, we give the possible values of $\rho_2 , \rho_1$, and the associated value of $N_{n,m,h} (\rho_2 , \rho_1)$. For any values of $\rho_2 , \rho_1$ that are not explicitly given, it should be interpreted that \break $N_{n,m,h} (\rho_2 , \rho_1) = 0$ for those values. It may also be helpful for the reader to note that if $h' \geq h$, then $\mathscr{L}_{2n}^{h'} \subseteq \mathscr{L}_{2n}^h$, and so the results for the higher ranges of $h$ will also appear in the lower ranges of $h$, although the latter will have additional results. \\

\textbf{Case 1:} $m+1 \leq n \leq 2m$. \\

\underline{Subcase 1.1:} $m \leq h \leq 2n$. 

\begin{align*}
\begin{tabular}{ |c|c|c| }
\hline
\textbf{Range of $\rho_2$} & \textbf{Range of $\rho_1$} & \textbf{Value of $N_{n,m,h} (\rho_2 , \rho_1)$} \\
\hline
$\rho_2 = 0$ & $\rho_1 = 0$ & $q$ \\
\hline
\end{tabular} \\
\end{align*}

\underline{Subcase 1.2:} $2m-n \leq h \leq m-1$. \\

\begin{align*}
\begin{tabular}{ |c|c|c| }
\hline
\textbf{Range of $\rho_2$} & \textbf{Range of $\rho_1$} & \textbf{Value of $N_{n,m,h} (\rho_2 , \rho_1)$} \\
\hline
$\rho_2 = 0$ & $\rho_1 = 0$ & $q$ \\
\hline
$h+1 \leq \rho_2 \leq m$ & $\rho_1 = \rho_2$ & $(q-1) q^{2 \rho_2 -h}$ \\
\hline
$h+1 \leq \rho_2 \leq m$ & $2m+1- \rho_2 \leq \rho_1 \leq n$ & $(q-1)^2 q^{2 \rho_1 + 2 \rho_2 -2m -h -1}$ \\
\hline
\end{tabular} \\
\end{align*}

\underline{Subcase 1.3:} $0 \leq h \leq 2m-n-1$. (This does not apply when $n=2m$). \\

\begin{align*}
\begin{tabular}{ |c|c|c| }
\hline
\textbf{Range of $\rho_2$} & \textbf{Range of $\rho_1$} & \textbf{Value of $N_{n,m,h} (\rho_2 , \rho_1)$} \\
\hline
$\rho_2 = 0$ & $\rho_1 = 0$ & $q$ \\
\hline
$h+1 \leq \rho_2 \leq m$ & $\rho_1 = \rho_2$ & $(q-1) q^{2 \rho_2 -h}$ \\
\hline
$2m+1-n \leq \rho_2 \leq m$ & $2m+1- \rho_2 \leq \rho_1 \leq n$ & $(q-1)^2 q^{2 \rho_1 + 2 \rho_2 -2m -h -1}$ \\
\hline
\end{tabular} \\
\end{align*}

\textbf{Case 2:} $n \geq 2m+1$. \\

\underline{Subcase 2.1:} $n \leq h \leq 2n$. 

\begin{align*}
\begin{tabular}{ |c|c|c| }
\hline
\textbf{Range of $\rho_2$} & \textbf{Range of $\rho_1$} & \textbf{Value of $N_{n,m,h} (\rho_2 , \rho_1)$} \\
\hline
$\rho_2 = 0$ & $\rho_1 = 0$ & $q$ \\
\hline
\end{tabular} \\
\end{align*}

\underline{Subcase 2.2:} $2m \leq h \leq n-1$. 

\begin{align*}
\begin{tabular}{ |c|c|c| }
\hline
\textbf{Range of $\rho_2$} & \textbf{Range of $\rho_1$} & \textbf{Value of $N_{n,m,h} (\rho_2 , \rho_1)$} \\
\hline
$\rho_2 = 0$ & $\rho_1 = 0$ & $q$ \\
\hline
$\rho_2 = 0$ & $h+1 \leq \rho_1 \leq n$ & $(q-1) q^{2 \rho_1 -h}$ \\
\hline
\end{tabular} \\
\end{align*}

\underline{Subcase 2.3:} $m \leq h \leq 2m-1$. 

\begin{align*}
\begin{tabular}{ |c|c|c| }
\hline
\textbf{Range of $\rho_2$} & \textbf{Range of $\rho_1$} & \textbf{Value of $N_{n,m,h} (\rho_2 , \rho_1)$} \\
\hline
$\rho_2 = 0$ & $\rho_1 = 0$ & $q$ \\
\hline
$\rho_2 = 0$ & $2m+1 \leq \rho_1 \leq n$ & $(q-1) q^{2 \rho_1 -2m}$ \\
\hline
\end{tabular} \\
\end{align*}

\underline{Subcase 2.4:} $0 \leq h \leq m-1$. 

\begin{align*}
\begin{tabular}{ |c|c|c| }
\hline
\textbf{Range of $\rho_2$} & \textbf{Range of $\rho_1$} & \textbf{Value of $N_{n,m,h} (\rho_2 , \rho_1)$} \\
\hline
$\rho_2 = 0$ & $\rho_1 = 0$ & $q$ \\
\hline
$\rho_2 = 0$ & $2m+1 \leq \rho_1 \leq n$ & $(q-1) q^{2 \rho_1 -2m}$ \\
\hline
$h+1 \leq \rho_2 \leq m$ & $\rho_1 = \rho_2$ & $(q-1) q^{2 \rho_1 -h}$ \\
\hline
$h+1 \leq \rho_2 \leq m$ & $2m+1 -\rho_2 \leq \rho_1 \leq n$ & $(q-1)^2 q^{2 \rho_1 +2 \rho_2 -2m -h-1}$ \\
\hline
\end{tabular} \\
\end{align*}

\textbf{Special Case:} While the above includes the possibility that $m=0$, not all of the cases and subcases apply as the ranges may be empty. It is worth noting that when $m=0$ only Case 2, Subcases 2.1 and 2.2 apply.
\end{lemma}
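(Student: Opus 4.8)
The plan is to recast $N_{n,m,h}(\rho_2,\rho_1)$ as a count of $(\rho,\pi)$-partitions and then run a case analysis governed by where the index $m+1$ falls inside the block decomposition of $\reduced\big(H_{n+1,n+1}(\mathbfe{\alpha})\big)$.

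The structural input is as follows. Since $H_{m+1,m+1}(\mathbfe{\alpha})=H_{n+1,n+1}(\mathbfe{\alpha})[m+1;m+1]$ and $\reduced(H)[i;i]=\reduced\big(H[i;i]\big)$ (used in the proof of Lemma~\ref{lemma, multiset H equals multiset reduced H}), the matrix $\reduced\big(H_{m+1,m+1}(\mathbfe{\alpha})\big)$ is the leading $(m+1)\times(m+1)$ submatrix of the block-diagonal matrix $\reduced\big(H_{n+1,n+1}(\mathbfe{\alpha})\big)$, whose blocks from the top left are the non-strict lower skew-triangular blocks $H_t,\dots,H_2$ of total size $\rho_1$, then the zero block $H_1''$, then $H_1'$ of size $\pi_s\in\{0,1\}$. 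Because a leading $s\times s$ submatrix of a non-strict lower skew-triangular Hankel block of size $p$ has rank $\max(0,2s-p)$, one reads off $\rho_s$ and $\pi_s$ of $H_{m+1,m+1}(\mathbfe{\alpha})$ from the $(\rho,\pi)$-partition of $H_{n+1,n+1}(\mathbfe{\alpha})$ together with the location of $m+1$. Two further facts prune the bookkeeping: a square Hankel matrix whose first $h$ skew-diagonals vanish can be invertible only if its size exceeds $h$, so $\mathbfe{\alpha}\in\mathscr{L}_{2n}^h$ forces $\rho_s\big(H_{n+1,n+1}(\mathbfe{\alpha})\big)\in\{0\}\cup\{h+1,\dots,n\}$ and $\rho_s\big(H_{m+1,m+1}(\mathbfe{\alpha})\big)\in\{0\}\cup\{h+1,\dots,m\}$; and for an $N\times N$ Hankel matrix, $\rho_s=0$ with $\pi_s\le1$ is equivalent to the first $2N-2$ skew-diagonals vanishing.

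The count then splits into regimes. If $\rho_s\big(H_{n+1,n+1}(\mathbfe{\alpha})\big)=\rho_1\le m$ the restriction automatically has $\rho_s=\rho_1$ and $\pi_s\in\{0,1\}$, so $\rho_2=\rho_1$ and $N_{n,m,h}(\rho_1,\rho_1)$ counts $\mathbfe{\alpha}\in\mathscr{L}_{2n}^h$ with $\rho_s\big(H_{n+1,n+1}(\mathbfe{\alpha})\big)=\rho_1$ and $\pi_s\le1$; this is evaluated from Lemmas~\ref{lemma, number of reduced matrices with given partition} and~\ref{lemma, number of Hankel matrices with given reduced matrix}, equivalently by parametrising such $\mathbfe{\alpha}$ through the invertible leading block $H[\rho_1,\rho_1]$, one free coefficient, and either a forced recurrence continuation or one further free coefficient, with the sums over compositions of $\rho_1$ collapsing by $\sum_j\binom{\rho-1}{j}(q-1)^j=q^{\rho-1}$. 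If instead $m<\rho_1\le n$, then $m+1$ sits inside one of the blocks $H_t,\dots,H_2$; writing $\rho_2$ for the combined size of the blocks preceding it, $\pi_s\big(H_{m+1,m+1}(\mathbfe{\alpha})\big)\le1$ forces that block to have size at least $2(m+1-\rho_2)-1$, hence $\rho_1\ge 2m+1-\rho_2$, exactly the lower bound in the ``jump'' rows. In the degenerate regime $\rho_2=0$ the vanishing criterion forces $\alpha_0=\dots=\alpha_{2m-1}=0$, which forces $\rho_1\in\{0\}\cup\{2m+1,\dots,n\}$. Throughout, the leading-zero constraint is absorbed via the count of invertible $r\times r$ Hankel matrices with first $h$ skew-diagonals zero (which vanishes unless $r\ge h+1$), producing the uniform $q^{-h}$ shifts in the table entries.

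The outer split follows the statement: Case~1 ($n\le 2m$) versus Case~2 ($n\ge 2m+1$) decides whether a jump block can be wide enough to straddle $m+1$ without forcing $\rho_1>n$, and the subcase thresholds $h=m-1,\ 2m-n-1,\ 2m-1,\ n-1$ are where the forced-zero prefix begins to overlap, respectively, the range $\{h+1,\dots,m\}$ for $\rho_2$, the jump inequality, and the ranges $\{2m+1,\dots,n\}$ or $\{h+1,\dots,n\}$ for $\rho_1$; within each subcase one reads off the surviving rows and sums the geometric series. The main obstacle is the jump regime: one must enumerate precisely the $(\rho,\pi)$-partitions that place $m+1$ inside an admissible block of minimal size $2(m+1-\rho_2)-1$, keep $\pi_s\le1$ at both scales simultaneously, honour the leading-zero constraint, and verify that the resulting composition sums telescope to $(q-1)^2q^{2\rho_1+2\rho_2-2m-h-1}$. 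Checking which subcases have empty ranges — in particular why $m=0$ meets only Case~2, Subcases~2.1 and~2.2 — is then routine.
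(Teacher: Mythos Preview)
Your proposal is correct and follows essentially the same approach as the paper. Both arguments hinge on the identity $\reduced(H)[i;i]=\reduced(H[i;i])$ to read off the strict $(\rho,\pi)$-characteristic of $H_{m+1,m+1}(\mathbfe{\alpha})$ from the $(\rho,\pi)$-partition of $H_{n+1,n+1}(\mathbfe{\alpha})$, then enumerate admissible partitions via Lemmas~\ref{lemma, number of reduced matrices with given partition} and~\ref{lemma, number of Hankel matrices with given reduced matrix} and collapse the resulting composition sums with the binomial identity $\sum_j\binom{N}{j}(q-1)^j=q^N$; your ``jump regime'' is exactly the paper's Scenario~c, with the same minimal block size $2m+1-2\rho_2$ and the same lower bound $\rho_1\ge 2m+1-\rho_2$.
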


\begin{proof}

For each of the non-trivial subcases above, in order to determine the value of\break $N_{n,m,h} (\rho_2 , \rho_1)$ given a particular range for $\rho_1$ and $\rho_2$, we will first determine the possible \rhopi-partitions of $H_{n+1 , n+1} (\mathbfe{\alpha})$ given the restrictions in (\ref{statement, lemma, main theorem N_m,n,h (rho_1 , rho_2) evaluation, N_n,m,h def}). From there, we can apply Lemmas \ref{lemma, number of reduced matrices with given partition} and \ref{lemma, number of Hankel matrices with given reduced matrix} to determine the number of values that $H_{n+1 , n+1} (\mathbfe{\alpha})$ can take, which is equivalent to $N_{n,m,h} (\rho_2 , \rho_1)$. That is, if we let 
\begin{align*}
&\mathscr{P}_{n,m,h} (\rho_2 , \rho_1 ) \\
&:= \bigg\{ \partition \Big( H_{n+1 , n+1} (\mathbfe{\alpha}) \Big) : \mathbfe{\alpha} \in \mathscr{L}_{2n}^h
	\text{ and } \substack{ 
	\rho_s \Big( H_{m+1 , m+1} (\mathbfe{\alpha}) \Big) = \rho_2 , 
		\hspace{1em} \pi_s \Big( H_{m+1 , m+1} (\mathbfe{\alpha}) \Big) \in \{ 0 , 1 \} \\
	\rho_s \Big( H_{n+1 , n+1} (\mathbfe{\alpha}) \Big) = \rho_1 , 
		\hspace{1em} \pi_s \Big( H_{n+1 , n+1} (\mathbfe{\alpha}) \Big) \in \{ 0 , 1 \} } \bigg\} ,
\end{align*}
then
\begin{align} \label{statement, rho_1 rho_2 possibilities lemma, N_n,m,h (rho_1 , rho_2) formula}
N_{n,m,h} (\rho_2 , \rho_1)
= \sum_{P \in \mathscr{P}_{n,m,h} (\rho_2 , \rho_1 )}
	\sum_{M \in S_{\reduced} (P)}
	\sum_{H \in S_{\hank} (M)} 1 . 
\end{align}
We will use this formula several times. \\

\textbf{Case 1:} $m+1 \leq n \leq 2m$. \\

\underline{Subcase 1.1:} $m \leq h \leq 2n$. \\

Suppose that $h \geq n$. Then, we have that $H_{n+1 , n+1} (\mathbfe{\alpha})$ is a lower skew-triangular Hankel matrix. That is, 
\begin{align*}
H_{n+1 , n+1} (\mathbfe{\alpha})
= \begin{pNiceMatrix}
0 & \Cdots & \Cdots & \Cdots & \Cdots & \Cdots & 0 \\
\Vdots &  &  &  &  &  & \Vdots \\
0 & \Cdots & \Cdots & \Cdots & \Cdots & \Cdots & 0 \\
0 & \Cdots & \Cdots & \Cdots & \Cdots & 0 & \alpha_{h} \\
0 & \Cdots & \Cdots & \Cdots & 0 & \alpha_{h} & \alpha_{h+1} \\
\Vdots &  &  & \Iddots & \Iddots & \Iddots & \Vdots \\
0 & \Cdots & 0 & \alpha_{h} & \alpha_{h+1} & \Cdots & \alpha_{2n} 
\end{pNiceMatrix} ,
\end{align*}
where $\alpha_{h} , \ldots , \alpha_{2n} \in \mathbb{F}_q$. Since we require that $\pi_s \Big( H_{n+1 , n+1} (\mathbfe{\alpha}) \Big) \in \{ 0,1 \}$, we can see that we must have $\alpha_{h} , \ldots , \alpha_{2n-1} = 0$, while $\alpha_{2n}$ is still free to take any value in $\mathbb{F}_q$. \\

Thus, we must have $\rho_2 = 0$ and $\rho_1 = 0$, and $N_{n,m,h} (\rho_2 , \rho_1) = q$. \\

Suppose now that $m \leq h \leq n-1$. Then, we have that $H_{m+1 , m+1} (\mathbfe{\alpha})$ is a lower skew-triangular Hankel matrix. That is, 
\begin{align*}
H_{m+1 , m+1} (\mathbfe{\alpha})
= \begin{pNiceMatrix}
0 & \Cdots & \Cdots & \Cdots & \Cdots & \Cdots & 0 \\
\Vdots &  &  &  &  &  & \Vdots \\
0 & \Cdots & \Cdots & \Cdots & \Cdots & \Cdots & 0 \\
0 & \Cdots & \Cdots & \Cdots & \Cdots & 0 & \alpha_{h} \\
0 & \Cdots & \Cdots & \Cdots & 0 & \alpha_{h} & \alpha_{h+1} \\
\Vdots &  &  & \Iddots & \Iddots & \Iddots & \Vdots \\
0 & \Cdots & 0 & \alpha_{h} & \alpha_{h+1} & \Cdots & \alpha_{2m} 
\end{pNiceMatrix} ,
\end{align*}
where $\alpha_{h} , \ldots , \alpha_{2m} \in \mathbb{F}_q$. Since we require that $\pi_s \Big( H_{m+1 , m+1} (\mathbfe{\alpha}) \Big) \in \{ 0,1 \}$, we can see that we must have $\alpha_{h} , \ldots , \alpha_{2m-1} = 0$. Since in this case we have $n \leq 2m$, we can see that our result for $h \geq n$ (above) applies. \\

So, again, we must have $\rho_2 = 0$ and $\rho_1 = 0$, and $N_{n,m,h} (\rho_2 , \rho_1) = q$. \\

\underline{Subcase 1.2:} $2m-n \leq h \leq m-1$. \\

\textit{Scenario 1.2.a:} There is the trivial scenario where $\rho_2 = 0$ and $\rho_1 = 0$ as above, and $N_{n,m,h} (\rho_2 , \rho_1) = q$. \\

\textit{Scenario 1.2.b and 1.2.c:} Now suppose that $\rho_2$ is non-zero. Since the first $h$ terms of $\mathbfe{\alpha}$ are zero, we can see that we must have $\rho_2 \geq h+1$. Also, by definition of the strict $\rho$-characteristic, we have $\rho_2 \leq m$. So, we must have $h+1 \leq \rho_2 \leq m$. \\

At this point it is helpful to consider the reduced \rhopi-form of $H_{m+1 , m+1} (\mathbfe{\alpha})$. It is of the form
\begin{align} \label{statement, rho_1 rho_2 possibilities lemma, subcase 1.2, red h_(m+1,m+1)}
\reduced \Big( H_{m+1 , m+1} (\mathbfe{\alpha}) \Big)
= \begin{pmatrix}
\begin{matrix}
H_{t_1 +t_2} &  &  &  \\
 & \ddots &  &  \\
 &  & H_{t_1 +2} &  \\
 &  &  & H_{t_1 +1} 
\end{matrix}
& \vline & \mathbf{0} \\
\hline
\mathbf{0} & \vline &
\begin{NiceMatrix}
0 & \Cdots & \Cdots & 0 \\
\Vdots &  &  & \Vdots \\
0 & \Cdots & \Cdots & 0 \\
0 & \Cdots & 0 & \lambda_2 
\end{NiceMatrix}
\end{pmatrix}.
\end{align}
Here, $\lambda_2 \in \mathbb{F}_q$, and the bottom-left quadrant is indicative of the fact that $\pi_s \Big( H_{m+1 , m+1} (\mathbfe{\alpha}) \Big) \in \{ 0,1 \}$. The matrices $H_{t_1 +1} , \ldots , H_{t_1 +t_2}$ are all non-strict lower skew-triangular Hankel matrices. Our indexing begins at $t_1 +1$ because the matrix $\reduced \Big( H_{n+1 , n+1} (\mathbfe{\alpha}) \Big)$, which is an extension of $\reduced \Big( H_{m+1 , m+1} (\mathbfe{\alpha}) \Big)$, will use the blocks $H_{2} , \ldots , H_{t_1}$. Since we are considering the scenario where $\rho_2$ is non-zero, it is not difficult to see that we must have $t_1 , t_2 \geq 1$. \\

Let us denote the \rhopi-partition of $\reduced \Big( H_{m+1 , m+1} (\mathbfe{\alpha}) \Big)$ by $(p' , p'' , p_{t_1 +1} , \ldots , p_{t_1 +t_2})$. The values of $p' , p''$ relate to the bottom-left quadrant of $\reduced \Big( H_{m+1 , m+1} (\mathbfe{\alpha}) \Big)$ above, which we understand fully, and so we are only really interested in $p_{t_1 +1} , \ldots , p_{t_1 +t_2}$. Now, $p_{t_1 +t_2}$ is the number of rows/columns of $H_{t_1 +t_2}$. Since $H_{t_1 +t_2}$ is a non-strict lower skew-triangular Hankel matrix and the first $h$ terms of $\mathbfe{\alpha}$ are zero, we can see that we must have $p_{t_1 +t_2} \geq h+1$. Let us now write
\begin{align} \label{statement, rho_1 rho_2 possibilities lemma, subcase 1.2, s_i def}
s_i 
:= \sum_{j=i}^{t_1 + t_2} p_{j}
\end{align}
for $i = t_1 +1 , \ldots , t_1 + t_2$, and we note that $s_{t_1 +t_2} = p_{t_1 +t_2} \geq h+1$ and $s_{t_1 +1} = \sum_{j=t_1 +1}^{t_1 + t_2} p_{j} = \rho_2$. We can see that the number of possible values of $p_{t_1 +1} , \ldots , p_{t_1 +t_2}$ is equal to the number of ways we can satisfy the conditions
\begin{align*}
h+1 \leq s_{t_1 +t_2} < s_{t_1 +t_2 -1} < \ldots < s_{t_1 +2} < s_{t_1 +1} = \rho_2 ,
\end{align*}
which is simply
\begin{align*}
\binom{\rho_2 - h - 1}{t_2 -1} .
\end{align*}
Of course, we want to know what values $t_2$ can take. The maximum value is the number of integers in $[h+1 , \rho_2]$, which is $\rho_2 - h$. Thus, $1 \leq t_2 \leq \rho_2 -h$. \\

\textit{Scenario 1.2.b:} Now let us consider the reduced \rhopi-form of $H_{n+1 , n+1} (\mathbfe{\alpha})$. The trivial scenario is where
\begin{align*}
\reduced H_{n+1 , n+1} (\mathbfe{\alpha})
= \begin{pmatrix}
\begin{matrix}
H_{t_1 +t_2} &  &  &  \\
 & \ddots &  &  \\
 &  & H_{t_1 +2} &  \\
 &  &  & H_{t_1 +1} 
\end{matrix}
& \vline & \mathbf{0} \\
\hline
\mathbf{0} & \vline &
\begin{NiceMatrix}
0 & \Cdots & \Cdots & 0 \\
\Vdots &  &  & \Vdots \\
0 & \Cdots & \Cdots & 0 \\
0 & \Cdots & 0 & \lambda_1 
\end{NiceMatrix}
\end{pmatrix}.
\end{align*}
That is, $\reduced H_{n+1 , n+1} (\mathbfe{\alpha})$ has more-or-less the same blocks that $\reduced H_{m+1 , m+1} (\mathbfe{\alpha})$ does. Of course, in this scenario, we have $\lambda_2 = 0$, but $\lambda_1$ is still free to take any value in $\mathbb{F}_q$. \\

So, in this scenario, we have $h+1 \leq \rho_2 = \rho_1 \leq m$. Consider the possible values of $t_2$, and the possible values of $p_{t_1 +1} , \ldots , p_{t_1 +t_2}$, that we established above, and apply this to (\ref{statement, rho_1 rho_2 possibilities lemma, N_n,m,h (rho_1 , rho_2) formula}). Using Lemmas \ref{lemma, number of reduced matrices with given partition} and \ref{lemma, number of Hankel matrices with given reduced matrix}, we obtain
\begin{align}
\begin{split} \label{statement, rho_1 rho_2 possibilities lemma, subcase 1.2, rho_1 = rho_2, number of alpha}
N_{n,m,h} (\rho_2 , \rho_1)
= &\sum_{t_2 = 1}^{\rho_2 -h}
	\binom{\rho_2 - h - 1}{t_2 -1}
	\Big( (q-1)^{t_2} q^{\rho_2 - t_2 +1} \Big)
	q^{t_2} \\
= &(q-1) q^{\rho_2 +1} \sum_{t_2 = 0}^{\rho_2 -h-1}
	\binom{\rho_2 - h - 1}{t_2 } (q-1)^{t_2} \\
= &(q-1) q^{\rho_2 +1} q^{\rho_2 - h -1} 
= (q-1) q^{2 \rho_2 -h} . \\
\end{split}
\end{align}

\textit{Scenario 1.2.c:} Again, let us consider the reduced \rhopi-form of $H_{n+1 , n+1} (\mathbfe{\alpha})$. The non-trivial scenario is where
\begin{align*}
\reduced H_{n+1 , n+1} (\mathbfe{\alpha})
= \begin{pmatrix}
\begin{matrix}
H_{t_1 +t_2} &  &  &  \\
 & \ddots &  &  \\
 &  & H_{t_1 +2} &  \\
 &  &  & H_{t_1 +1} 
\end{matrix}
& \vline & \mathbf{0} & \vline & \mathbf{0} \\
\hline
\mathbf{0} & \vline &
\begin{matrix}
H_{t_1} &  &  &  \\
 & \ddots &  &  \\
 &  & H_{3} &  \\
 &  &  & H_{2} 
\end{matrix}
& \vline & \mathbf{0} \\
\hline
\mathbf{0} & \vline & \mathbf{0} & \vline &
\begin{NiceMatrix}
0 & \Cdots & \Cdots & 0 \\
\Vdots &  &  & \Vdots \\
0 & \Cdots & \Cdots & 0 \\
0 & \Cdots & 0 & \lambda_1 
\end{NiceMatrix}
\end{pmatrix} ,
\end{align*}
where $t_1 \geq 2$ and $t_2 \geq 1$, and $H_2 , \ldots , H_{t_1}$ are non-strict lower skew-triangular Hankel matrices. Let us denote the \rhopi-partition of $\reduced \Big( H_{n+1 , n+1} (\mathbfe{\alpha}) \Big)$ by $(p_{1} ' , p_{1} '' , p_{2} , \ldots , p_{t_1 +t_2} )$. Now, note that the bottom-left $(m+1 - \rho_2 ) \times (m+1 - \rho_2 )$ submatrix of (\ref{statement, rho_1 rho_2 possibilities lemma, subcase 1.2, red h_(m+1,m+1)}), namely
\begin{align*}
\begin{pNiceMatrix}
0 & \Cdots & \Cdots & 0 \\
\Vdots &  &  & \Vdots \\
0 & \Cdots & \Cdots & 0 \\
0 & \Cdots & 0 & \lambda_2 
\end{pNiceMatrix} ,
\end{align*}
is a top-left submatrix of $H_{t_1}$. Considering this, and the fact that $H_{t_1}$ is a non-strict lower skew-triangular Hankel matrix, we can see that the smallest value that $p_{t_1}$ (the number of rows/columns of $H_{t_1}$) can take is $2m+1 - 2 \rho_2$ which occurs when $\lambda_2$ is non-zero. Note that this requires that
\begin{align*}
n 
\geq p_{t_1} + (p_{t_1 + 1} + \ldots + p_{t_1 +t_2})
\geq ( 2m+1 - 2 \rho_2 ) + (\rho_2 )
= 2m+1 - \rho_2 .
\end{align*}
In particular, we must have 
\begin{align} \label{statement, rho_1 rho_2 possibilities lemma, subcase 1.2, rho_1 > rho_2, rho_2 geq 2m+1 - n}
\rho_2 \geq 2m+1 - n .
\end{align}
However, this is already satisfied by the requirement that $\rho_2 \geq h+1$ which we established earlier, because in this subcase we have $h+1 \geq 2m+1-n$. \\

Now, the smallest value that $\rho_1$ can take occurs when $t_1 = 2$ and it is $p_{t_1} + (p_{t_1 + 1} + \ldots + p_{t_1 +t_2})$; in turn, as we established above, the minimum value this can take is $( 2m+1 - 2 \rho_2 ) + (\rho_2 ) = 2m+1 - \rho_2$. Obviously the largest value $\rho_1$ can take is $n$. Thus, we have $h+1 \leq \rho_2 \leq m$ and $2m+1 - \rho_2 \leq \rho_1 \leq n$. \\

We now wish to determine the range of $t_1$, and the number of values that $p_{2} , \ldots , p_{t_1}$ can take. To this end, let us extend the definition of $s_i$ in (\ref{statement, rho_1 rho_2 possibilities lemma, subcase 1.2, s_i def}) to include $i=2 , \ldots , t_1$. We have already established that
\begin{align*}
s_{t_1} 
= p_{t_1} + (p_{t_1 + 1} + \ldots + p_{t_1 +t_2})
= (2m+1 - 2 \rho_2 ) + (\rho_2 )
= 2m+1 - \rho_2
\end{align*}
and
\begin{align*}
s_2
= p_{2} + \ldots + p_{t_1 + t_2}
= \rho_1 .
\end{align*}
Thus, the number of possible values that $p_{t_1} , \ldots , p_{2}$ can take is equal to the number of ways we can satisfy the conditions
\begin{align*}
2m+1 - \rho_2 \leq s_{t_1} < s_{t_1 -1} < \ldots < s_{3} < s_{2} = \rho_1 ,
\end{align*}
which is simply
\begin{align*}
\binom{\rho_1 + \rho_2 - 2m - 1}{t_1 -2} .
\end{align*}
Also, the maximum value that $t_1 -1$ can take is the number of integers in $[2m+1 - \rho_2 , \rho_1]$, which is $\rho_1 + \rho_2 - 2m$. Thus, $2 \leq t_1 \leq \rho_1 + \rho_2 +1 - 2m$. \\

We now wish to determine the value of $N_{n,m,h} (\rho_2 , \rho_1)$. Using (\ref{statement, rho_1 rho_2 possibilities lemma, N_n,m,h (rho_1 , rho_2) formula}) and Lemmas \ref{lemma, number of reduced matrices with given partition} and \ref{lemma, number of Hankel matrices with given reduced matrix}, we obtain
\begin{align*}
&N_{n,m,h} (\rho_2 , \rho_1) \\
= &\sum_{t_2 = 1}^{\rho_2 -h} \binom{\rho_2 - h - 1}{t_2 -1}
	\sum_{t_1 = 2}^{\rho_1 + \rho_2 +1 - 2m} \binom{\rho_1 + \rho_2 - 2m - 1}{t_1 -2}
	\bigg[ (q-1)^{t_1 + t_2 -1 } q^{\rho_1 - t_1 - t_2 +2} \bigg] q^{t_1 + t_2 -1} \\
= & (q-1)^2 q^{\rho_1 +1} \bigg( \sum_{t_2 = 0}^{\rho_2 -h -1} \binom{\rho_2 - h - 1}{t_2 } (q-1)^{t_2}
	\times \sum_{t_1 = 0}^{\rho_1 + \rho_2 - 2m -1} \binom{\rho_1 + \rho_2 - 2m - 1}{t_1 } (q-1)^{t_1} \bigg) \\
= &(q-1)^2 q^{2 \rho_1 + 2 \rho_2 -2m -h -1} . \\
\end{align*}

\underline{Subcase 1.3:} $0 \leq h \leq 2m-n-1$. \\

This subcase does not apply when $n=2m$, because the range of $h$ will be empty. \\

Now, scenarios a and b are the same as in Subcase 1.2. It is only Scenario c that differs slightly. The situation is the same until we reach (\ref{statement, rho_1 rho_2 possibilities lemma, subcase 1.2, rho_1 > rho_2, rho_2 geq 2m+1 - n}) which states that $\rho_2 \geq 2m+1-n$. In Subcase 1.2 we had $h+1 \geq 2m+1-n$, which meant that (\ref{statement, rho_1 rho_2 possibilities lemma, subcase 1.2, rho_1 > rho_2, rho_2 geq 2m+1 - n}) was implied by the condition $\rho_2 \geq h+1$ which was established earlier. However, in this subcase, we have $h+1 \leq 2m-n$, and so the two conditions $\rho_2 \geq 2m+1-n$ and $\rho_2 \geq h+1$ are equivalent to the single condition $\rho_2 \geq 2m+1-n$. \\

So, in this subcase and scenario, we have $2m+1-n \leq \rho_2 \leq m$ and $2m+1 - \rho_2 \leq \rho_1 \leq n$. The value of $N_{n,m,h} (\rho_2 , \rho_1)$ is determined in exactly the same way, and so we have
\begin{align*}
N_{n,m,h} (\rho_2 , \rho_1) 
= (q-1)^2 q^{2 \rho_1 + 2 \rho_2 -2m -h -1} . \\
\end{align*}

\textbf{Case 2:} $n \geq 2m+1$. \\

\underline{Subcase 2.1:} $n \leq h \leq 2n$. \\

This is the same as the first part of Subcase 1.1. That is, we must have $\rho_2 , \rho_1 = 0$, and $N_{n,m,h} (\rho_2 , \rho_1)  = q$. \\

\underline{Subcase 2.2:} $2m \leq h \leq n-1$. \\

Since $h \geq 2m$, we must have that $\rho_2 = 0$ and
\begin{align*}
\reduced \Big( H_{m+1 , m+1} (\mathbfe{\alpha}) \Big)
= \begin{pNiceMatrix}
0 & \Cdots & \Cdots & 0 \\
\Vdots &  &  & \Vdots \\
0 & \Cdots & \Cdots & 0 \\
0 & \Cdots & 0 & \lambda_2 
\end{pNiceMatrix} ,
\end{align*}
where $\lambda_2 = 0$, unless $h=2m$ in which case $\lambda_2 \in \mathbb{F}_q$. \\

The trivial scenario is when $\rho_2 , \rho_1 = 0$ and $N_{n,m,h} (\rho_2 , \rho_1)  = q$, as in Subcase 2.1. \\

The non-trivial scenario is when $\rho_1$ is non-zero. Let us write
\begin{align*}
\reduced H_{n+1 , n+1} (\mathbfe{\alpha})
= \begin{pmatrix}
\begin{matrix}
H_{t} &  &  &  \\
 & \ddots &  &  \\
 &  & H_{3} &  \\
 &  &  & H_{2} 
\end{matrix}
& \vline & \mathbf{0} \\
\hline
\mathbf{0} & \vline &
\begin{NiceMatrix}
0 & \Cdots & \Cdots & 0 \\
\Vdots &  &  & \Vdots \\
0 & \Cdots & \Cdots & 0 \\
0 & \Cdots & 0 & \lambda_1 
\end{NiceMatrix}
\end{pmatrix} .
\end{align*}
The matrices $H_t , \ldots , H_2$ are non-strict lower skew-triangular Hankel matrices, and since $\rho_1 \neq 0$ we must have that $t \geq 2$. Also, $\lambda_1 \in \mathbb{F}_q$. Let $(p_1 ' , p_1 '' , p_2 , \ldots , p_t )$ be the \rhopi-partition of $H_{n+1 , n+1} (\mathbfe{\alpha})$. \\

Now, since $H_t$ is a non-strict lower skew-triangular Hankel matrix, and since the first $h$ terms of $\mathbfe{\alpha}$ are zero, we can see that the smallest value $p_t$ can take is $h+1$ and this occurs when $\alpha_h \neq 0$. \\

The smallest value $\rho_1$ can take is also $h+1$, which occurs when $t=2$ and $p_t = h+1$. The maximum value $\rho_1$ can take is $n$. Thus, we have $h+1 \leq \rho_1 \leq n$. \\

As previously, we define
\begin{align*}
s_i 
:= \sum_{j=i}^{t} p_{j}
\end{align*}
for $i=t , \ldots , 2$. We have already established that
\begin{align*}
s_t = p_t \geq h+1
\end{align*}
and 
\begin{align*}
s_2 = p_2 + \ldots p_t = \rho_1 .
\end{align*}
Thus, the number of possible values that $p_2 , \ldots , p_t$ can take is equal to the number of ways that we can satisfy the conditions
\begin{align*}
h+1 \leq s_t < s_{t-1} < \ldots < s_3 < s_2 = \rho_1 ,
\end{align*}
which is simply
\begin{align*}
\binom{\rho_1 - h -1}{t-2} .
\end{align*}
Also, the range of $t$ is $2 \leq t \leq \rho_1 - h +1$. Thus, once again using (\ref{statement, rho_1 rho_2 possibilities lemma, N_n,m,h (rho_1 , rho_2) formula}) and Lemmas \ref{lemma, number of reduced matrices with given partition} and \ref{lemma, number of Hankel matrices with given reduced matrix}, we obtain
\begin{align*}
&N_{n,m,h} (\rho_2 , \rho_1) \\
= &\sum_{t = 2}^{\rho_1 -h+1} \binom{\rho_1 - h -1}{t-2}
	\Big( (q-1)^{t -1 } q^{\rho_1 - t +2} \Big) q^{t -1} \\
= &(q-1) q^{\rho_1 +1} \sum_{t = 0}^{\rho_1 -h-1} \binom{\rho_1 - h -1}{t} (q-1)^{t } \\
= &(q-1) q^{2\rho_1 -h} . \\
\end{align*}

\underline{Subcase 2.3:} $m \leq h \leq 2m-1$. \\

This is very similar to Subcase 2.2. As in the second part of Subcase 1.1, we must have that $\rho_2 = 0$ and
\begin{align} \label{statement, rho_1 rho_2 possibilities lemma, subcase 2.2, red h_(m+1,m+1) form}
\reduced \Big( H_{m+1 , m+1} (\mathbfe{\alpha}) \Big)
= \begin{pNiceMatrix}
0 & \Cdots & \Cdots & 0 \\
\Vdots &  &  & \Vdots \\
0 & \Cdots & \Cdots & 0 \\
0 & \Cdots & 0 & \lambda_2 
\end{pNiceMatrix} ,
\end{align}
where $\lambda_2 \in \mathbb{F}_q$. There is the trivial scenario where $\rho_1 = 0$, for which we have $N_{n,m,h} (\rho_2 , \rho_1) = q$. \\

There is also the non-trivial scenario where $\rho_1$ is non-zero. Again, we write
\begin{align*}
\reduced H_{n+1 , n+1} (\mathbfe{\alpha})
= \begin{pmatrix}
\begin{matrix}
H_{t} &  &  &  \\
 & \ddots &  &  \\
 &  & H_{3} &  \\
 &  &  & H_{2} 
\end{matrix}
& \vline & \mathbf{0} \\
\hline
\mathbf{0} & \vline &
\begin{NiceMatrix}
0 & \Cdots & \Cdots & 0 \\
\Vdots &  &  & \Vdots \\
0 & \Cdots & \Cdots & 0 \\
0 & \Cdots & 0 & \lambda_1 
\end{NiceMatrix}
\end{pmatrix} 
\end{align*}
and let $(p_1 ' , p_1 '' , p_2 , \ldots , p_t )$ be the \rhopi-partition. The difference between this subcase and Subcase 2.2 is that here we have that the first $2m$ terms of $\mathbfe{\alpha}$ are zero (due to (\ref{statement, rho_1 rho_2 possibilities lemma, subcase 2.2, red h_(m+1,m+1) form})), whereas in Subcase 2.2 
it was the first $h$ terms of $\mathbfe{\alpha}$ that were zero. \\

Nonetheless, we proceed in a similar manner and we see that the smallest value that $p_t$ can take is $2m+1$, and the range of $\rho_1$ is $2m+1 \leq \rho_1 \leq n$. The number of possible values that $p_2 , \ldots , p_t$ can take is equal to the number of ways that we can satisfy the conditions
\begin{align*}
2m+1 \leq s_t < s_{t-1} < \ldots < s_3 < s_2 = \rho_1 ,
\end{align*}
which is simply
\begin{align*}
\binom{\rho_1 - 2m -1}{t-2} .
\end{align*}
Also, the range of $t$ is $2 \leq t \leq \rho_1 - 2m +1$. Thus, 
\begin{align*}
&N_{n,m,h} (\rho_2 , \rho_1) \\
= &\sum_{t = 2}^{\rho_1 -2m+1} \binom{\rho_1 - 2m -1}{t-2}
	\Big( (q-1)^{t -1 } q^{\rho_1 - t +2} \Big) q^{t -1} \\
= &(q-1) q^{2\rho_1 -2m} . \\
\end{align*}

\underline{Subcase 2.4:} $0 \leq h \leq m-1$. \\

\textit{Scenario 2.3.a:} Again, there is the trivial scenario where $\rho_2 , \rho_1 = 0$, for which we have $N_{n,m,h} (\rho_2 , \rho_1)  =q$. \\

\textit{Scenario 2.3.b:} There is there scenario where $\rho_2 = 0$ but $\rho_1$ is non-zero. As in Subcase 2.3, the possible values of $\rho_1$ is $2m+1 \leq \rho_1 \leq n$, and $N_{n,m,h} (\rho_2 , \rho_1) =(q-1) q^{2\rho_1 -2m}$. \\

\textit{Scenario 2.3.c and 2.3.d:} It is possible that $\rho_2$ is non-zero. As in Subcases 1.2 and 1.3, the values that $\rho_2$ can take are $h+1 \leq \rho_2 \leq m$. \\

\textit{Scenario 2.3.c:} In this scenario, we have $\rho_1 = \rho_2$. This is virtually identical to Scenario 1.2.b, and by the same reasoning we have $N_{n,m,h} (\rho_2 , \rho_1) = (q-1) q^{2 \rho_2 - h}$. \\

\textit{Scenario 2.3.d:} In this scenario, we have $\rho_1 > \rho_2$. We proceed as in Scenario 1.2.c, until (\ref{statement, rho_1 rho_2 possibilities lemma, subcase 1.2, rho_1 > rho_2, rho_2 geq 2m+1 - n}) which states that $\rho_2 \geq 2m+1-n$. Since in this case we have $n \geq 2m+1$, we can see that (\ref{statement, rho_1 rho_2 possibilities lemma, subcase 1.2, rho_1 > rho_2, rho_2 geq 2m+1 - n}) is already implied by the condition $h+1 \leq \rho_2 \leq m$. The possible values of $\rho_1$ are still $2m+1 - \rho_2 \leq \rho_1 \leq n$, and again we have $N_{n,m,h} (\rho_2 , \rho_1) = (q-1)^2 q^{2 \rho_1 + 2 \rho_2 -2m-h-1}$.
\end{proof}

\vspace{1em}

\textbf{Acknowledgements:} This research was conducted during a postdoctoral fellowship funded by the Leverhulme Trust research project grant ``Moments of $L$-functions in Function Fields and Random Matrix Theory'' (grant number RPG-2017-320) secured by Julio Andrade. The author is most grateful for this support.

\bibliography{YiasemidesBibliography1}{}
\bibliographystyle{YiaseBstNumer1}

\end{document}